\newcommand{\R}{\mathbb{R}}
\newcommand{\N}{\mathbb{N}}
\newcommand{\E}{\mathbb{E}}
\newcommand{\F}{\mathbb{F}}
\newcommand{\G}{\mathbb{G}}
\renewcommand{\H}{\mathbb H}
\renewcommand{\P}{\mathbb{P}}
\newcommand{\B}{\mathbb{B}}
\newcommand{\tr}{\operatorname{trace}}
\newcommand{\Q}{\mathbf Q}
\renewcommand{\AA}{\mathbb A}
\newcommand{\DD}{\mathcal D}
\newcommand{\GG}{\mathcal G}
\newcommand{\LL}{\mathcal L}
\newcommand{\OO}{\mathcal O}
\renewcommand{\P}{\mathbb P}
\newcommand{\MM}{\mathbb M}
\newcommand{\FFF}{\mathcal F}
\newcommand{\<}{\langle}
\renewcommand{\>}{\rangle}
\newcounter{RomanNumber}
\newcommand{\MyRoman}[1]{\setcounter{RomanNumber}{#1}
{\rm\Roman{RomanNumber}}}
\renewcommand{\tr}{{\rm Tr}}
\newtheorem{tm}{Theorem}[section]
\newtheorem{lm}{Lemma}[section]
\newtheorem{prop}{Proposition}[section]
\newtheorem{cor}{Corollary}[section]
\newtheorem{rk}{Remark}[section]
\newtheorem{ap}{Assumption}[section]
\newcommand{\sly}[1]{{\color{black} #1}}
\begin{document}
\title{Semi-implicit energy-preserving numerical schemes for stochastic wave equation via SAV approach}

       \author{
       Jianbo Cui\footnotemark[1],
        Jialin Hong\footnotemark[2],
         and
        Liying Sun\footnotemark[3],\\
       {\small
       \footnotemark[1]~Department of Applied Mathematics, The Hong Kong Polytechnic University, Hung Hom, Hong Kong.}\\
        {\small\footnotemark[2]~\footnotemark[3]~Institute of Computational Mathematics and Scientific/Engineering Computing,}
        \\{\small Academy of Mathematics and Systems Science, Chinese Academy of Sciences, }
        {\small Beijing 100190, P.R.China }\\
        {\small\footnotemark[3]~Institute of Applied Physics  and Computational Mathematics, Beijing, 100094, China.}
        }
       \maketitle
        \footnotetext{\footnotemark[3]Corresponding author: liyingsun@lsec.cc.ac.cn}

\maketitle

       \begin{abstract}
          {\rm\small In this paper, we propose and analyze semi-implicit numerical schemes for the stochastic wave equation (SWE) with general nonlinearity and multiplicative noise.   
          	These numerical schemes, called stochastic scalar auxiliary variable (SAV) schemes, are  constructed by transforming the considered SWE into a higher dimensional stochastic system with a stochastic SAV.
          	We prove that they can be solved explicitly and preserve the modified energy evolution law  and the regularity structure of the original system. 
          	These structure-preserving properties are the keys to overcoming the mutual effect of the noise and nonlinearity. 
          	By proving new regularity estimates of the introduced SAV,  
          	we establish the strong convergence rate of stochastic SAV schemes and the further fully-discrete schemes with the finite element method in spatial direction. 
          	To the best of our knowledge, this is the first result on the construction and strong convergence of semi-implicit energy-preserving {scheme}s for nonlinear SWE.}\\

\textbf{AMS subject classification: }{\rm\small60H08, 60H35, 65C30.}\\

\textbf{Key Words: }{\rm\small} stochastic wave equation, semi-implicit numerical scheme, energy evolution law, SAV approach, strong convergence
\end{abstract}

\section{Introduction}
\label{int}
In recent years, the stochastic wave equation has been widely exploited to characterize 
the sound propagation in the sea,  
the dynamics of the primary current density vector field within the grey matter of the human brain, 
heat conduction around a ring, 
the dilatation of shock waves throughout the sun,
the vibration of a string under the action of stochastic forces, etc., (see, e.g., \cite{Dalang1962LNM,Etter2012Adv,Galka2008CN,Orsingher1982AIHPSB,Thomas2012JMP}). 
As an intrinsic property of such stochastic partial differential equation (SPDE), the energy evolution law describes the longtime behaviour of the original system and is a keystone to prove the well-posedness of SWE with complex nonlinearities (see, e.g., \cite{BRZEZNIAK20164157,C02,Chow06}). 
Due to the loss of {the explicit expression of the}  analytical solution, {a lot of} researchers have been concerned with numerical schemes to simulate SWE and {the associated} energy evolution law.

Up to now, various energy-preserving {scheme}s have been developed for solving  SWEs (see, e.g., \cite{ACLW16,BLM21,YC07,CLS13,CHJS19,HHS22}).  
For instance, the finite element method and the stochastic trigonometric scheme are shown in \cite{CLS13} to preserve the linear growth of the averaged energy for the linear SWE with additive noise.
With respect to the SWE with globally Lipschitz continuous coefficient and additive noise, \cite{BLM21} presents that the discontinuous Galerkin finite element method satisfies the trace formula. 
By means of the linear finite element approximation and a stochastic trigonometric method, \cite{ACLW16} designs a fully-discrete scheme for the  nonlinear SWE driven by multiplicative noise, and proves that its numerical solution satisfies an almost trace formula in  additive noise case.  \cite{HHS22} constructs several fully-discrete schemes which preserve the averaged energy evolution law for nonlinear SWE driven by multiplicative noise based on {the Pad\'e approximation}. 
For the SWE with superlinear coefficient and additive noise, \cite{CHJS19} proposes {an implicit} fully-discrete scheme by adopting the spectral Galerkin method and the averaged vector field method, and {establishes} {its strong convergence rate}, where the exponential integrability and energy-preserving property of the numerical solution play key roles.

Despite fruitful results on energy-preserving numerical schemes for {nonlinear} SWEs, one has to solve the large stochastic algebraic system by iterative methods  due to the implicitness of existing numerical schemes in general. In practice,  it  would  be  ideal  to  be  able  to  treat  the nonlinear term in SWE explicitly. However,  it is known that the classical explicit and semi-implicit numerical schemes for stochastic differential equations with superlinear coefficients suffer from the strong or weak divergence phenomenon (see, e.g., \cite{Cui21,HJK10}). To deal with this issue, the existing approach often exploits implicit schemes, or uses the tamed (or truncated) strategy to achieve explicit numerical schemes which could not preserve the \sly{energy evolution law}.
Up to now, there has not been any numerical scheme which is explicitly solvable and can preserve the averaged energy evolution law of nonlinear SWE. One main objective of the present work is to fill this gap for nonlinear SWE via a structure-preserving approach.


\sly{
	For deterministic conservative partial differential equations and gradient flow systems, the SAV approach has achieved a lot of successes in constructing semi-implicit numerical {\color{black} scheme}s which could preserve the energy decaying property or the modified energy conservation law (see, e.g., \cite{SX18,SXY2019SIAMR}). In contrast, less result on the SAV schemes is known for SPDEs.
	A natural question is how the noise influences the construction  and analysis of numerical schemes based on the SAV approach. It turns out that several new challenges appear in this aspect for nonlinear SWE. 
	First, the energy  suffers from the random effect and satisfies the stochastic energy evolution law, rather than the energy conservation law. 
	Second, the direct usage of SAV approach may not balance the diffusion coefficient and the new auxiliary variable, which brings troubles in constructing numerical schemes preserving the modified energy evolution law. 
	Last, due to the mutual effect of noise and nonlinearity on the stochastic auxiliary variable, the strong convergence analysis of stochastic SAV numerical schemes  is more complicated  than the standard error estimate. 
	
	In this paper, we first transform the considered SWE into a higher dimensional stochastic system (called the stochastic SAV reformulation) by introducing  a stochastic SAV.  By further exploiting the exponential Runge--Kutta methods and Runge--Kutta methods, as well as the spatial finite element method, we propose two novel kinds of semi-linear temporal semi-discrete stochastic SAV schemes and the further fully-discrete schemes. 
	Then we prove that the proposed stochastic SAV schemes could preserve the averaged energy evolution law for the stochastic SAV reformulation, and overcome the divergence issue in the superlinear coefficient case. 
	By establishing  new regularity estimates of the stochastic auxiliary variable, we prove that the temporal strong convergence order of  stochastic SAV numerical schemes is $1$ under the globally Lipschitz continuous  condition.  
	In particular, this result is even new for the deterministic SWE.  The convergence analysis of stochastic SAV numerical schemes in the non-globally Lipschitz case is beyond the scope of this paper, and will be studied in the future.

}

The rest of this paper is organized as follows.
Section \ref{sec2} presents an abstract formulation of SWE, and introduces its fundamental  properties. 
In Section \ref{sec3}, the semi-implicit energy-preserving numerical schemes based on the SAV approach for {SWEs} are proposed.
Section \ref{sec4} is devoted to  deducing  the strong error estimate for the proposed numerical schemes.
In Section \ref{sec5}, we present energy-preserving fully-discrete schemes based on the SAV approach and the finite element  method. 
Numerical experiments are carried out in Section \ref{sec6} to verify theoretical results.

\section{Stochastic wave equation}
\label{sec2}

In this section, we present the main assumptions and introduce the properties of the SWE with general nonlinearities and multiplicative noise, i.e.,
\begin{equation}
\left\{
\begin{aligned}
\label{mod;swe}
&du(t)=v(t)dt,\quad\quad \;\; &\text{in} \;\; \OO \times (0,T],\\
&dv(t)=\Lambda u(t)dt-f(u(t))dt+g(\Theta u(t))dW(t), \quad\quad \;\; &\text{in} \;\; \OO \times (0,T], \\
&u(0)=u_0, \quad v(0)=v_0, \quad \;\; &\text{in} \;\; \OO,
\end{aligned}
\right.
\end{equation}
where $\OO\in \mathbb R^{d},$ $d\le 3,$ is a bounded convex domain with polygonal boundary $\partial \OO$, $T\in(0,\infty),$  $u_0,v_0:\OO\rightarrow \R$ are $\mathcal F_0$-measurable, and $\Theta u(t):=(u(t), \partial_{x_1} u(t), \cdots, \partial_{x_d} u(t))$ with $\partial_{x_i}=\frac {\partial}{\partial x_i}, i\le d.$  
Here,  $\Lambda=\sum\limits_{i=1}^d\frac{\partial^2}{\partial x_i^2}$ is the Laplace operator with the homogeneous Dirichlet boundary condition with eigenpairs $\{(\lambda_i,e_i)\}_{i=1}^\infty,$ where $0< \lambda_1\leq\lambda_2\leq\cdots,$ and $\{e_i\}_{i=1}^\infty$ forms an orthonormal basis in $L^2:=L^2(\OO;\R).$  
Moreover, $W(\cdot)=\sum\limits_{k=1}^\infty\Q^{\frac12}e_k\beta_k(\cdot)$ is an $L^2$-valued $\Q$-Wiener process with respect to a filtered probability space $(\Omega, \FFF, \{\FFF_t\}_{t\geq 0}, \P)$, where $\{\beta_k\}_{k\in \N^+}$ is a sequence of \sly{i.i.d.} real-valued Brownian motions and $\Q$ is a symmetric, positive definite and finite trace operator.  
Denoting $L^p:=L^p(\mathcal O,\mathbb R),$ $p\ge 1,$ and $\dot\H^r:=\DD((-\Lambda)^\frac r2),$ $r\in\mathbb R,$ equipped with the inner product
$
\<x,y\>_{\dot{\H}^r}=\left\<(-\Lambda)^\frac r2x,(-\Lambda)^\frac r2y\right\>_{L^2}
=\sum\limits_{i=1}^\infty\lambda_i^r\<x,e_i\>_{L^2}$$\<y,e_i\>_{L^2},$ 
we introduce the mild assumptions on $f: \dot{\mathbb H}^1\to \dot{\mathbb H}$ and $g:  \dot{\mathbb H}^1\otimes\dot{\mathbb H}^{\otimes d} \to \mathcal L_2(\mathbf Q^{\frac 12} (\dot{\mathbb H}),\dot{\mathbb H})$, where $\dot{\mathbb H}:=\dot{\mathbb H}^0=L^2$ and $\mathcal L_2(\mathbf Q^{\frac 12} (\dot{\mathbb H}),\dot{\mathbb H})$ is the space of Hilbert--Schmidt operators from $\mathbf Q^{\frac 12} (\dot{\mathbb H})$ into $\dot{\mathbb H}$.

\begin{ap}
	\label{ap1}
	Assume that \sly{$\|\mathbf Q^{\frac 12}\|_{\mathcal L_2( \dot{\mathbb H},\dot{\mathbb H})}<\infty,$}
	\begin{align*}
		&\|f(u)\|_{\dot{\mathbb H}}+\|g(\Theta u)\|_{\mathcal L_2(\mathbf Q^{\frac 12} (\dot{\mathbb H}), \dot{\mathbb H})}\le c_0(\|u\|_{\dot{\H}^1}+1),\\
		&\|f(u)-f(\widetilde u)\|_{\dot{\mathbb H}}+\|g(\Theta u)-g(\Theta \widetilde u)\|_{\mathcal L_2(\mathbf Q^{\frac 12} (\dot{\mathbb H}),\dot{\mathbb H})}\le
		c_1\|u-\widetilde u\|_{\dot{\H}^1}
	\end{align*}
	for $u,\widetilde u \in \dot{\H}^1,$ where $c_0,c_1\ge 0$.
\end{ap}

\begin{ap}
	\label{ap2}
	Assume that 
	\begin{align*}
		&\|f(u)\|_{\dot{\mathbb H}}\le b_1(\|u\|_{\dot {\H}^1}),\\
		&\|f(u)-f(\widetilde u)\|_{\dot{\mathbb H}}\le b_2(\|u\|_{\dot {\H}^1},\|\widetilde u\|_{\dot {\H}^1})\|u-\widetilde u\|_{\dot {\H}^1},\\
		&\|g(\Theta u)\|_{\mathcal L_2(\mathbf Q^{\frac 12} (\dot{\mathbb H}),\dot{\mathbb H})}\le c_2(1+\|u\|_{L^{2k+2}}^{k+1}+\|u\|_{\dot{\H}^1}),\\
		&\|g(\Theta u)-g(\Theta \widetilde u)\|_{\mathcal L_2(\mathbf Q^{\frac 12} (\dot{\mathbb H}),\dot{\mathbb H})} \le b_3(\|u\|_{\dot{\H}^1},\|\widetilde u\|_{\dot{\H}^1})\|u-\widetilde u\|_{\dot{\H}^1},
	\end{align*}
	where $c_2\ge 0$, and $b_1,b_2,b_3$ are  polynomials with degree $2k+1,$ $2k$ and $k$, $k\in \mathbb N^+,$ respectively.  Furthermore, suppose that $\|\mathbf Q^{\frac 12}\|_{\mathcal L_2( \dot{\mathbb H},\dot{\mathbb H})}<\infty,$ and that there exist positive constants $c_3,c_4,\delta_0>0$ such that the potential $F(u)=\int_{\mathcal O} \widetilde F(u(x)) dx$ satisfies 
	\begin{align*}
		f(u)=\widetilde F'(u)\;\quad  \text{and}\; \quad F(u)+\delta_0 \ge c_3 \|u\|_{\dot{\mathbb H}}^2+c_4\|u\|_{L^{2k+2}}^{2k+2}.
	\end{align*}
\end{ap}

{
	\begin{rk}
		A typical example of $f$ satisfying Assumption \ref{ap2} is the Nemytskii operator of  $f(\xi)=\sum_{j=1}^{2k+1}a_j \xi^j $ with odd degree $2k+1$ and $a_{2k+1}>0$. More precisely, when $d\le 2,$ $k\in \mathbb N^+$ and when $d=3,$ $k=1$ (see, e.g., \cite{C02}).  It can be also found that Assumption \ref{ap1} is a special case of Assumption \ref{ap2}.
		
	\end{rk}
}

\subsection{Well-posedness and energy evolution law}
To present spatial and temporal  regularity estimates, as well as the averaged energy evolution law, of \eqref{mod;swe}, we let $X=(u,v)^\top$.
Then the compact form of  \eqref{mod;swe} reads
\begin{equation}
\label{mod;swe1}
dX(t)=AX(t)dt+\F(X(t))dt+\G(X(t))dW(t),\quad t\in(0,T],
\end{equation}
where 
\begin{align*}
	X(0)=X_0=
	\begin{bmatrix}
		u_0\\
		v_0
	\end{bmatrix},\quad
	A=
	\begin{bmatrix}
		0 & I\\
		\Lambda & 0
	\end{bmatrix},\quad
	\F(X(t))=
	\begin{bmatrix}
		0\\
		-f(u(t))
	\end{bmatrix},\quad
	\G=
	\begin{bmatrix}
		0\\
		g(\Theta u(t))
	\end{bmatrix}
\end{align*}
with $I$ being the identity operator defined in  $L^2$.  
Set the product space
$
\H^r:=\dot\H^r\times \dot\H^{r-1},$ $r\in \R,
$
endowed with the inner product
$
\<X_1,X_2\>_{\H^r}=\<x_1,x_2\>_{\dot\H^r}+\<y_1,y_2\>_{\dot\H^{r-1}}
$
for any $X_1=(x_1,y_1)^\top\in \H^r$ and $X_2=(x_2,y_2)^\top\in \H^r.$ 
It can been shown that the domain of operator $A$ is given by 
$$
\DD(A)=\left\{X\in\H:
AX=[v,\Lambda u]^\top
\in\H:=L^2\times \dot\H^{-1}\right\}=\dot\H^1\times L^2,$$ 
and $A$ generates a unitary group $E(t)=\exp (tA)=
\begin{aligned}
\begin{bmatrix}
\mathcal C(t)& (-\Lambda)^{-\frac 12}\mathcal S(t)\\
-(-\Lambda)^{\frac 12}\mathcal S(t) & \mathcal C(t)
\end{bmatrix}
\end{aligned},$ $t\in \R,$ 
where $\mathcal C(t)=\cos(t(-\Lambda)^{\frac 12})$ and $\mathcal S(t)=\sin(t(-\Lambda)^{\frac 12})$ are cosine and sine operators, respectively. 

Under  Assumption \ref{ap2}, one could follow the proof of \cite[Lemma 4.1 and Theorem 4.2]{C02} and get the global existence of the mild solution.

\begin{lm}[Existence and uniqueness of mild solution]
	\label{lm;eu}
	Let Assumption \ref{ap2}  hold and $X_0\in \mathbb H^1$. 
	Then there exists a unique mild solution of \eqref{mod;swe1}, i.e.,
	\begin{align}
		\label{sol;swe}
		X(t)=E(t)X_0+\int_0^tE(t-s)\mathbb F(X(s))ds+\int_0^t E(t-s)\mathbb G(X(s)) dW(s)
	\end{align}
	for $t \in[0, T].$ 
	Moreover, for any $p \in[2, \infty),$ there exists  $C(p,X_0,\Q,T)>0$ such that
	\begin{align*}
		\sup _{t \in[0, T]}\|X(t)\|_{L^{p}(\Omega; \mathbb H^1)} \leq C(p,X_0,\Q,T).
	\end{align*}
\end{lm}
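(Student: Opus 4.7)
The plan is a truncation plus Picard iteration argument to establish local well-posedness, combined with an energy estimate applied up to stopping times to obtain a uniform moment bound and globalize, in the spirit of \cite[Lemma 4.1, Theorem 4.2]{C02}.

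First, for each $N\in\mathbb N^+$ I would introduce globally Lipschitz cutoffs $f_N(u):=f(u)\,\chi(\|u\|_{\dot{\mathbb H}^1}/N)$ and $g_N$ defined analogously, where $\chi\in C_b^\infty(\mathbb R_+)$ equals $1$ on $[0,1]$ and vanishes outside $[0,2]$. Since $E(t)$ is a unitary group on $\mathbb H^r$, the mild-solution map
\begin{align*}
\Phi(X)(t):=E(t)X_0+\int_0^t E(t-s)\mathbb F_N(X(s))ds+\int_0^t E(t-s)\mathbb G_N(X(s))dW(s)
\end{align*}
is well defined on $L^p(\Omega;C([0,t^*];\mathbb H^1))$, and via BDG is a contraction on small time intervals; gluing in time yields a unique mild solution $X^N$ on $[0,T]$. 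Setting $\tau_N:=\inf\{t\in[0,T]:\|X^N(t)\|_{\mathbb H^1}\ge N\}\wedge T$, the processes $X^N$ and $X^M$ agree on $[0,\tau_N\wedge\tau_M]$, so the pasted limit $X$ solves the untruncated equation on $[0,\tau_\infty)$ with $\tau_\infty:=\lim_N\tau_N$.

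Next I would derive an $N$-independent moment bound via the Hamiltonian
\begin{align*}
H(u,v):=\tfrac12\|v\|_{\dot{\mathbb H}}^2+\tfrac12\|u\|_{\dot{\mathbb H}^1}^2+F(u)+\delta_0.
\end{align*}
The coercivity of $F$ in Assumption \ref{ap2} yields $H\ge\tfrac12\|(u,v)\|_{\mathbb H^1}^2+c_3\|u\|_{\dot{\mathbb H}}^2+c_4\|u\|_{L^{2k+2}}^{2k+2}$, and in particular $\|g(\Theta u)\|_{\mathcal L_2(\mathbf Q^{1/2}(\dot{\mathbb H}),\dot{\mathbb H})}^2\le C(1+H(u,v))$. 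Applying It\^o's formula to $H(X(t\wedge\tau_N))$, the deterministic terms $\<v,\Lambda u\>_{\dot{\mathbb H}}$, $\<u,v\>_{\dot{\mathbb H}^1}$ and $\<f(u),v\>_{\dot{\mathbb H}}$ cancel pairwise thanks to $f=\widetilde F'$, so
\begin{align*}
H(X(t\wedge\tau_N))=H(X_0)+\int_0^{t\wedge\tau_N}\<v,g(\Theta u)dW\>_{\dot{\mathbb H}}+\tfrac12\int_0^{t\wedge\tau_N}\|g(\Theta u)\|_{\mathcal L_2}^2ds.
\end{align*}
Applying It\^o to $(1+H)^p$ with the same cancellation, then BDG to the resulting martingale and the bound $\|g\|^2\lesssim 1+H$ to the It\^o correction, Gronwall gives $\mathbb E\sup_{s\le T}(1+H(X(s\wedge\tau_N)))^p\le C(p,X_0,\mathbf Q,T)$ independent of $N$. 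Markov's inequality then implies $\P(\tau_N<T)\le C/N^{2p}\to 0$, so $\tau_\infty=T$ a.s.\ and $X$ is a global mild solution of \eqref{mod;swe1} on $[0,T]$; Fatou transfers the moment bound to $X$, giving the claimed estimate. Uniqueness follows by a Gronwall argument on the $\mathbb H^1$-difference of two mild solutions up to their common first exit time from a ball, combined with $\tau_\infty=T$.

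The main difficulty is closing the It\^o estimate under Assumption \ref{ap2}: the growth $\|g(\Theta u)\|_{\mathcal L_2}^2\lesssim\|u\|_{L^{2k+2}}^{2k+2}$ is at exactly the critical order matching the coercivity of $F$, so the estimate must be carried out on $(1+H)^p$ rather than on plain Sobolev norms, since only the potential term in $H$ can absorb the worst contribution of the It\^o correction. A secondary subtlety is the BDG bound of the martingale $\int\<v,g(\Theta u)dW\>$, whose quadratic variation involves two factors each only controlled by $H^{1/2}$; splitting via Young's inequality with a small constant absorbs the resulting $(1+H)^p$ term into the left-hand side before Gronwall can be applied.
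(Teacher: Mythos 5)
Your proposal is correct and follows essentially the same route the paper intends: the paper gives no proof of its own but defers to \cite[Lemma 4.1 and Theorem 4.2]{C02}, whose argument is precisely the truncation-plus-Picard local theory combined with an It\^o/energy estimate on the Hamiltonian up to stopping times that you reconstruct. Your identification of the critical balance between the $\|u\|_{L^{2k+2}}^{2k+2}$ growth of $g$ and the coercivity of $F$, and the Young-inequality absorption in the BDG step, matches the mechanism that makes the cited globalization work.
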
 

The Lyapunov energy functional $V_1:\mathbb{H}^1\rightarrow \mathbb{R}$ of \eqref{mod;swe1} is defined by
\begin{align}\label{Lya-V1}
	V_1(u,v)=\frac 12\|u\|^2_{\dot\H^1}+\frac 12\|v\|^2_{L^2}
	+\int_{\mathcal O} \widetilde F(u) dx,
\end{align}
and plays a crucial role in the study of \sly{well-posedness and blow-up} of \eqref{mod;swe1}. 
The following proposition introduces the averaged energy evolution law. 

\begin{prop}
	\label{lm;eel}
	Let Assumption \ref{ap2}  hold and $X_0\in \mathbb H^1$. 
	Then for $t\in [0,T],$ \eqref{mod;swe1} admits 
	\begin{align}\label{evo-law-ave}
		\E [V_1(u(t),v(t))]
		=&\E [V_1(u_0, v_0)]
		+\frac 12\int_0^t\E \Big[{\rm{Tr}}
		(g(\Theta u(s))\Q^\frac 12(g(\Theta u(s))\Q^\frac 12)^*)\Big]ds,
	\end{align}
	where ${\rm{Tr}}
	(g(\Theta u)\Q^\frac 12(g(\Theta u)\Q^\frac 12)^*)=\sum\limits_{i=1}^{\infty}\<g(\Theta u)\Q^\frac 12 e_i,g(\Theta u)\Q^\frac 12e_i\>_{L^2}$.
\end{prop}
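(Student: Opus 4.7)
The strategy is to apply the infinite-dimensional It\^o formula to the Lyapunov functional $V_1(u(t),v(t))$ and exploit the symplectic/Hamiltonian structure of the deterministic wave equation to cancel every drift term except the It\^o correction coming from the diffusion. The Fr\'echet derivatives of $V_1$ read $DV_1(u,v)\cdot(h_1,h_2)=\langle u,h_1\rangle_{\dot\H^1}+\langle f(u),h_1\rangle_{L^2}+\langle v,h_2\rangle_{L^2}$, where $f=\widetilde F'$ by Assumption \ref{ap2}, and the only non-trivial second derivative is $D^2V_1\cdot((0,k_1),(0,k_2))=\langle k_1,k_2\rangle_{L^2}$; in particular $du=v\,dt$ has no martingale part, so it contributes no It\^o correction. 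Substituting the two components of \eqref{mod;swe} and collecting terms gives
\begin{align*}
dV_1(u,v) = \bigl[\langle u,v\rangle_{\dot\H^1}+\langle v,\Lambda u\rangle_{L^2}\bigr]dt &+ \bigl[\langle f(u),v\rangle_{L^2}-\langle v,f(u)\rangle_{L^2}\bigr]dt \\
&+\tfrac{1}{2}\tr\!\bigl(g(\Theta u)\Q^{\frac12}(g(\Theta u)\Q^{\frac12})^*\bigr)dt+\langle v,g(\Theta u)\,dW\rangle_{L^2}.
\end{align*}

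The heart of the argument is the simultaneous cancellation of both bracketed drift terms: $\langle f(u),v\rangle_{L^2}-\langle v,f(u)\rangle_{L^2}=0$ by symmetry of the $L^2$ inner product, and $\langle u,v\rangle_{\dot\H^1}+\langle v,\Lambda u\rangle_{L^2}=0$ by self-adjointness of $-\Lambda$ under homogeneous Dirichlet boundary conditions, since $\langle u,v\rangle_{\dot\H^1}=\langle(-\Lambda)^{1/2}u,(-\Lambda)^{1/2}v\rangle_{L^2}=\langle-\Lambda u,v\rangle_{L^2}$. What remains is the trace term and a stochastic integral. Taking expectation, the stochastic integral is a genuine martingale of mean zero: its quadratic variation integrand $\|v\|_{L^2}^2\,\|g(\Theta u)\Q^{\frac12}\|_{\mathcal L_2}^2$ is integrable on $[0,T]\times\Omega$ by combining the growth bound in Assumption \ref{ap2} with the moment estimate in Lemma \ref{lm;eu}. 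Integrating in time then produces exactly \eqref{evo-law-ave}.

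The main obstacle is the rigorous justification of the It\^o formula at this level of regularity. The mild solution only lives in $\mathbb H^1=\dot\H^1\times L^2$, so $\Lambda u\in\dot\H^{-1}$ and the pairing $\langle v,\Lambda u\rangle_{L^2}$ written above is, strictly speaking, only formal; it is not directly covered by any off-the-shelf It\^o formula for mild solutions. I would remove this ambiguity by a spectral Galerkin approximation: letting $P_N$ denote the orthogonal projection onto $V_N:=\mathrm{span}\{e_1,\ldots,e_N\}$, consider
\begin{equation*}
du_N = v_N\,dt,\qquad dv_N = \Lambda u_N\,dt - P_N f(u_N)\,dt + P_N g(\Theta u_N)\,dW
\end{equation*}
on the finite-dimensional state space $V_N\times V_N$, where the classical It\^o formula applies without issue. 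The identity \eqref{evo-law-ave} is established term by term for $V_1(u_N,v_N)$ exactly as in the paragraphs above, and then one passes to the limit $N\to\infty$ using strong convergence $u_N\to u$ in $L^p(\Omega;C([0,T];\dot\H^1))$ and $v_N\to v$ in $L^p(\Omega;C([0,T];L^2))$ for $p\geq 2$, the polynomial growth and continuity of $\widetilde F$ implied by Assumption \ref{ap2}, and the Lipschitz/growth control on $g$, closing the argument by dominated convergence in the potential, trace, and martingale terms.
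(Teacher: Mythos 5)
Your proof is correct and follows the standard route that the paper itself implicitly relies on: the paper states Proposition \ref{lm;eel} without proof (deferring to the arguments of \cite{C02}), and elsewhere explicitly advocates exactly your regularization strategy — spectral Galerkin projection so that the pairing $\<v,\Lambda u\>_{L^2}$ and the It\^o formula are rigorous, followed by a passage to the limit. The cancellation of the Hamiltonian drift terms, the identification of the It\^o correction with the trace term, and the integrability check making the stochastic integral a true martingale are all as the paper intends.
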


\subsection{Regularity estimates}

In this part, we provide the properties of  $E(t),$ $t\in\mathbb R,$  and the regularity estimates of the mild solution of \eqref{mod;swe1}.    
The following lemma concerns with the temporal H\"{o}lder continuity of both sine and cosine operators, which {have} been discussed, for example, in \cite{ACLW16}.

\begin{lm}
	\label{lm;prpE_n}
	For $\gamma \in[0,1],$ there exists a positive constant {$C(\gamma)$} depending on $\gamma$, such that
	\begin{align*}
		&\|(\mathcal S(t)-\mathcal S(s))(-\Lambda)^{-\frac \gamma 2}\|_{\LL(\dot{\H})}\leq C(\gamma)(t-s)^\gamma,\\
		&\|(\mathcal C(t)-\mathcal C(s))(-\Lambda)^{-\frac \gamma 2}\|_{\LL(\dot{\H})}\leq C(\gamma)(t-s)^\gamma
	\end{align*}
	and
	$
	\|(E(t)-E(s))X\|_{\H}\leq \sly{C(\gamma)}(t-s)^\gamma \|X\|_{\H^\gamma}
	$ 
	for all $t\geq s\geq 0,$ {where $\LL(\dot{\H})$ denotes the space of all linear bounded operators from $\dot{\H}$ to $\dot{\H}$.}
\end{lm}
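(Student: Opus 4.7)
The plan is to diagonalise everything in the eigenbasis $\{e_i\}_{i=1}^\infty$ of $-\Lambda$ and interpolate two trivial bounds on the trigonometric symbols. Since $\mathcal{S}(t)e_i=\sin(t\sqrt{\lambda_i})e_i$ and $\mathcal{C}(t)e_i=\cos(t\sqrt{\lambda_i})e_i$, the scalar functions $\sigma_i(t):=\sin(t\sqrt{\lambda_i})$ and $\kappa_i(t):=\cos(t\sqrt{\lambda_i})$ satisfy the crude pair of bounds $|\sigma_i(t)-\sigma_i(s)|\le 2$ (and similarly for $\kappa_i$) and $|\sigma_i(t)-\sigma_i(s)|\le \sqrt{\lambda_i}(t-s)$ by the mean value theorem (noting $|\cos|,|\sin|\le 1$). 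Raising the first to power $1-\gamma$ and the second to power $\gamma$ yields
\begin{equation*}
|\sigma_i(t)-\sigma_i(s)|\le 2^{1-\gamma}\lambda_i^{\gamma/2}(t-s)^{\gamma},\qquad |\kappa_i(t)-\kappa_i(s)|\le 2^{1-\gamma}\lambda_i^{\gamma/2}(t-s)^{\gamma},
\end{equation*}
for every $\gamma\in[0,1]$.

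With these scalar estimates in hand, the first two assertions follow at once. For any $x=\sum_i x_i e_i\in\dot{\mathbb H}$, Parseval gives
\begin{equation*}
\|(\mathcal S(t)-\mathcal S(s))(-\Lambda)^{-\gamma/2}x\|_{\dot{\mathbb H}}^2 = \sum_{i=1}^{\infty}\lambda_i^{-\gamma}|\sigma_i(t)-\sigma_i(s)|^2 x_i^2 \le 4^{1-\gamma}(t-s)^{2\gamma}\|x\|_{\dot{\mathbb H}}^2,
\end{equation*}
and the same computation with $\kappa_i$ in place of $\sigma_i$ covers the cosine operator. So I would set $C(\gamma)=2^{1-\gamma}$ for these two bounds.

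For the third inequality I would write $X=(x,y)^\top\in\H^\gamma=\dot{\H}^\gamma\times\dot{\H}^{\gamma-1}$, expand
\begin{equation*}
(E(t)-E(s))X=\begin{bmatrix}(\mathcal C(t)-\mathcal C(s))x+(-\Lambda)^{-1/2}(\mathcal S(t)-\mathcal S(s))y\\ -(-\Lambda)^{1/2}(\mathcal S(t)-\mathcal S(s))x+(\mathcal C(t)-\mathcal C(s))y\end{bmatrix},
\end{equation*}
and control each of the four blocks by rewriting, e.g., $(\mathcal C(t)-\mathcal C(s))x=(\mathcal C(t)-\mathcal C(s))(-\Lambda)^{-\gamma/2}\cdot(-\Lambda)^{\gamma/2}x$ and applying the cosine estimate together with $\|(-\Lambda)^{\gamma/2}x\|_{\dot{\H}}=\|x\|_{\dot\H^\gamma}$. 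The mixed term $(-\Lambda)^{-1/2}(\mathcal S(t)-\mathcal S(s))y$ is handled by absorbing the $(-\Lambda)^{-1/2}$ into the shift of regularity: it becomes $(\mathcal S(t)-\mathcal S(s))(-\Lambda)^{-\gamma/2}\cdot(-\Lambda)^{(\gamma-1)/2}y$, whose $\dot{\H}$-norm is bounded by $C(\gamma)(t-s)^\gamma\|y\|_{\dot\H^{\gamma-1}}$. The two entries of the second coordinate are estimated in $\dot\H^{-1}$, which supplies the extra $(-\Lambda)^{-1/2}$ needed to swallow the unbounded $(-\Lambda)^{1/2}$ in front of $\mathcal S(t)-\mathcal S(s)$ and to downshift $\dot\H^{\gamma-1}$ to $\dot\H^{-1}$ in the cosine block. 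Summing the four contributions and taking square roots gives the advertised bound with a possibly enlarged $C(\gamma)$.

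There is no serious obstacle here; the only point that requires a little care is bookkeeping the regularity shifts so that every occurrence of $\mathcal S(t)-\mathcal S(s)$ or $\mathcal C(t)-\mathcal C(s)$ is paired with exactly the factor $(-\Lambda)^{-\gamma/2}$ that the first two bounds demand, and making sure the second coordinate is measured in $\dot\H^{-1}$ rather than $\dot\H$. Once that is set up, the estimates are spectral and essentially deterministic, so the constant can be tracked explicitly and is uniform in $t,s\ge 0$.
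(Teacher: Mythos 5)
Your proof is correct. The paper does not actually prove this lemma --- it simply cites \cite{ACLW16} --- and your spectral argument (interpolating $|\sin(t\sqrt{\lambda_i})-\sin(s\sqrt{\lambda_i})|\le\min(2,\sqrt{\lambda_i}(t-s))\le 2^{1-\gamma}\lambda_i^{\gamma/2}(t-s)^\gamma$, applying Parseval, and then bookkeeping the regularity shifts componentwise in $\H=L^2\times\dot\H^{-1}$ for the group $E(t)$) is precisely the standard proof found in that reference, so there is nothing to correct or compare.
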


\begin{lm}
	\label{lm;2}
	For any $t\ge 0,$ 
	$\mathcal C(t)$ and $\mathcal S(t)$ satisfy a trigonometric identity in the sense that
	$
	\|\mathcal S(t)x\|_{L^2}^2+\|\mathcal C(t)x\|_{L^2}^2=\|x\|_{L^2}^2$ for $x\in L^2.$
\end{lm}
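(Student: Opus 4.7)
The plan is to argue spectrally, using the orthonormal eigenbasis $\{e_i\}_{i=1}^\infty$ of $-\Lambda$ introduced earlier in the paper. Since $\mathcal{C}(t)$ and $\mathcal{S}(t)$ are defined through the functional calculus applied to $(-\Lambda)^{1/2}$, they act diagonally on this basis: $\mathcal{C}(t)e_i = \cos(t\sqrt{\lambda_i})\,e_i$ and $\mathcal{S}(t)e_i = \sin(t\sqrt{\lambda_i})\,e_i$. This reduces the operator identity to a scalar trigonometric identity on each spectral component.

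Concretely, first I would fix $x \in L^2$ and expand it as $x = \sum_{i=1}^{\infty} x_i e_i$ with $x_i = \langle x, e_i\rangle_{L^2}$, so that by Parseval $\|x\|_{L^2}^2 = \sum_i |x_i|^2$. Next I would write
\begin{align*}
\mathcal{C}(t)x = \sum_{i=1}^{\infty} \cos(t\sqrt{\lambda_i})\, x_i\, e_i, \qquad \mathcal{S}(t)x = \sum_{i=1}^{\infty} \sin(t\sqrt{\lambda_i})\, x_i\, e_i.
\end{align*}
Applying Parseval again gives
\begin{align*}
\|\mathcal{C}(t)x\|_{L^2}^2 + \|\mathcal{S}(t)x\|_{L^2}^2 = \sum_{i=1}^{\infty}\bigl(\cos^2(t\sqrt{\lambda_i}) + \sin^2(t\sqrt{\lambda_i})\bigr)|x_i|^2 = \sum_{i=1}^{\infty}|x_i|^2 = \|x\|_{L^2}^2,
\end{align*}
where the scalar identity $\cos^2\theta + \sin^2\theta = 1$ is the only analytic input.

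There is essentially no obstacle here: the only subtleties are ensuring that the series defining $\mathcal{C}(t)x$ and $\mathcal{S}(t)x$ converge in $L^2$ (which they do, since their partial sums have $\ell^2$ coefficients bounded by $|x_i|$, so the sums lie in $L^2$ by dominated convergence in $\ell^2$), and justifying the term-by-term application of Parseval, which is immediate from orthonormality of $\{e_i\}$. The result then follows without any additional assumption on $x$ beyond $x \in L^2$.
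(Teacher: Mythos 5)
Your proof is correct, and it is the standard spectral argument: the paper states this lemma without proof (treating it as a known property of the cosine and sine operators defined via the functional calculus of $(-\Lambda)^{1/2}$), and your expansion in the eigenbasis $\{e_i\}$ combined with Parseval and $\cos^2\theta+\sin^2\theta=1$ is exactly the argument being implicitly invoked. No gaps; the convergence remarks at the end are adequate (orthogonality plus square-summability of the coefficients already gives $L^2$-convergence of the series).
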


Based on the above trigonometric identity, it can be shown that $\|E(t)\|_{\LL(\H)}=1,$  $t\in\R.$  
By means of Lemmas \ref{lm;eu}-\ref{lm;2}, one could follow the similar arguments as in the proof of \cite[Lemma 3.3]{CHJS19} or  \cite[Theorem 7.4]{DZ92}, and obtain the following regularity estimates of the mild solution. 

\begin{prop}
	\label{holder}
	Let Assumption \ref{ap2}  hold and $T>0$. 
	Assume that $X_0\in \H^\beta$ for some $\beta \ge 1$ and that
	\begin{align*}
		&\|(-\Lambda)^{\frac {\beta-1}2} f(u)\|_{L^2}+\|(-\Lambda)^{\frac {\beta-1}2} g(\Theta u)\|_{\mathcal L_2(\mathbf Q^{\frac 12} (\dot{\mathbb H}), \dot{\mathbb H})}\le b_4(\|u\|_{\dot{\H}^1},\|u\|_{\dot{\H}^{\beta-1}}), \; u \in \dot{\H}^\beta,
	\end{align*}
	where  $b_4$ is a polynomial. Then it holds that for any $p\ge 2,$ 
	\begin{align*}
		\sup_{t\in [0,T]}\E \Big[\|X(t)\|_{\H^{\beta}}^p\Big]\le C(p,X_0,\Q,T),
	\end{align*}
	and for $0\le s\le t\le T,$
	\begin{align*}
		&\E \Big[\|u(t)-u(s)\|_{L^2}^p\Big]\le C(p,X_0,\Q,T)|t-s|^p, \\ 
		&\E \Big[\|v(t)-v(s)\|_{\dot{\mathbb H}^{-1}}^p\Big]\le C(p,X_0,\Q,T)|t-s|^{\frac {p}2}.
	\end{align*}
	{where  $C(p,X_0,\Q,T)$ is a positive constant depending on $p,X_0,\Q$ and $T$.} 
\end{prop}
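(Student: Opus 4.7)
The plan is to derive all three estimates by combining the mild formulation \eqref{sol;swe} with the strong form \eqref{mod;swe1}, exploiting the isometry of the group $E(t)$ on each $\H^r$ (Lemma \ref{lm;2}), the Hölder property of $E(t)$ (Lemma \ref{lm;prpE_n}), and the a priori $\H^1$-moment bound from Lemma \ref{lm;eu}.

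For the spatial regularity bound, I would apply the $\H^\beta$-norm to \eqref{sol;swe}. Since $E(t)$ acts as an isometry on each $\H^r$, the homogeneous term satisfies $\|E(t)X_0\|_{\H^\beta} = \|X_0\|_{\H^\beta}$. For the drift, $\|\F(X(s))\|_{\H^\beta} = \|f(u(s))\|_{\dot{\mathbb H}^{\beta-1}} = \|(-\Lambda)^{(\beta-1)/2} f(u(s))\|_{L^2}$, which by hypothesis is bounded by $b_4(\|u(s)\|_{\dot{\mathbb H}^1}, \|u(s)\|_{\dot{\mathbb H}^{\beta-1}})$; an analogous bound applies to $\|\G(X(s))\|_{\mathcal L_2(\Q^{1/2}\dot{\mathbb H}, \H^\beta)}$. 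Taking $L^p(\Omega)$-norms, using Minkowski for the drift and the Burkholder--Davis--Gundy inequality (together with the isometry of $E(t)$ pulled inside the stochastic convolution) for the noise term, I would arrive at an integral inequality for $\phi(t) := \E[\|X(t)\|_{\H^\beta}^p]$. The polynomial growth of $b_4$ in $\|u\|_{\dot{\mathbb H}^1}$ is harmless because Lemma \ref{lm;eu} supplies uniform bounds on all moments of $\|u(s)\|_{\dot{\mathbb H}^1}$, which can be decoupled by Hölder's inequality. Gronwall's lemma then yields $\sup_{t\le T}\phi(t) \le C(p,X_0,\Q,T)$.

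For the temporal estimates I would work from the strong form \eqref{mod;swe} directly. Since $du = v\,dt$, one has $u(t)-u(s) = \int_s^t v(r)\,dr$, so Hölder's inequality in time gives $\|u(t)-u(s)\|_{L^2}^p \le (t-s)^{p-1} \int_s^t \|v(r)\|_{L^2}^p\,dr$; taking expectation and invoking $\sup_r \E\|v(r)\|_{L^2}^p < \infty$ from Lemma \ref{lm;eu} yields the $(t-s)^p$ rate. For $v$, write
\begin{equation*}
v(t)-v(s) = \int_s^t \Lambda u(r)\,dr - \int_s^t f(u(r))\,dr + \int_s^t g(\Theta u(r))\,dW(r).
\end{equation*}
Measured in $\dot{\mathbb H}^{-1}$, one has $\|\Lambda u(r)\|_{\dot{\mathbb H}^{-1}} = \|u(r)\|_{\dot{\mathbb H}^1}$, and $\|f(u(r))\|_{\dot{\mathbb H}^{-1}} \le \lambda_1^{-1/2}\|f(u(r))\|_{L^2}$; both are uniformly in $L^p(\Omega)$ by Lemma \ref{lm;eu} and Assumption \ref{ap2}, contributing $O(|t-s|^p)$ after Hölder. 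The stochastic integral is treated by BDG in $\dot{\mathbb H}^{-1}$,
\begin{equation*}
\E\Big\|\int_s^t g(\Theta u(r))\,dW(r)\Big\|_{\dot{\mathbb H}^{-1}}^p \le C\,\E\Big(\int_s^t \|g(\Theta u(r))\|_{\mathcal L_2(\Q^{1/2}\dot{\mathbb H}, \dot{\mathbb H}^{-1})}^2\,dr\Big)^{p/2} \le C(t-s)^{p/2},
\end{equation*}
using Assumption \ref{ap2} together with the uniform $\H^1$-moment bound. Adding the three contributions gives the advertised $|t-s|^{p/2}$ rate for $v$.

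The principal technical obstacle I foresee is closing the Gronwall argument in the spatial-regularity step when $\beta > 2$: because $b_4$ is polynomial (rather than linear) in its second argument, a naive application does not close. I expect to circumvent this either by a bootstrap induction on the regularity index, using the already-established bounds at indices $\beta' < \beta$ to dominate the polynomial contributions, or by a stopping-time localisation $\tau_R := \inf\{t: \|X(t)\|_{\H^\beta} > R\}$, closing Gronwall on $[0,t\wedge\tau_R]$ and then sending $R\to\infty$ via uniform integrability. The author's pointer to \cite[Lemma 3.3]{CHJS19} and \cite[Theorem 7.4]{DZ92} suggests that one of these standard routes will suffice, and no fundamentally new idea beyond those references is required.
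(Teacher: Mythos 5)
Your proposal is correct and follows exactly the route the paper intends: the paper gives no written proof of Proposition \ref{holder} but defers to the standard argument of \cite[Lemma 3.3]{CHJS19} and \cite[Theorem 7.4]{DZ92}, which is precisely your combination of the mild formulation, the isometry of $E(t)$ on $\H^r$, the Burkholder--Davis--Gundy inequality, and the moment bounds of Lemma \ref{lm;eu}, together with the direct integration of $du=v\,dt$ and of the $v$-equation in $\dot{\H}^{-1}$ for the two temporal estimates. You also correctly identify the only delicate point --- the polynomial dependence of $b_4$ on $\|u\|_{\dot{\H}^{\beta-1}}$ for $\beta>2$ --- and your bootstrap in the regularity index closes because $b_4$ involves only norms of strictly lower order than $\beta$, so no further idea is needed.
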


Due to the loss of explicit expression of the exact solution, designing numerical schemes for nonlinear stochastic differential equations \sly{has} become an active area of research 
(see, e.g., \cite{CCDL20,CH17,CHLZ17b,KL18}). 
Below we shall focus on the construction and analysis of the temporal and fully-discrete numerical schemes for the considered SWE.

\section{SAV approach for stochastic wave equation}
\label{sec3}
The existing structure-preserving numerical schemes for the {nonlinear} SWE are implicit in general. 
When implementing such numerical schemes, one has to solve pathwisely complex stochastic algebraic systems, which often require a lot of computational costs.  
To avoid such issue, we introduce the equivalent form of \eqref{mod;swe} with an auxiliary variable and construct efficient  structure-preserving numerical schemes in this section.

\subsection{Stochastic wave equation with auxiliary variable}
\sly{In this part, we introduce} a scalar auxiliary variable 
$q:=\sqrt{F(u)+\delta_0}.$ 
Here, $\delta_0$ is a constant to make $q$ well-posed,  i.e., $F(u)+\delta_0\ge c> 0$ for any $u\in \dot{\mathbb H}^1$. 
\sly{Then} \eqref{mod;swe} can be reformulated into the following equivalent form
\begin{equation}
\label{mod;SAV}
\left\{\begin{aligned}
&d u(t)=v(t)dt,\quad  \;\; &\text{in} \;\; (0,T], \\
&d v(t)=\Lambda u(t)dt-\frac{f(u(t))}{\sqrt{F(u(t))+\delta_0}}q(t)dt+g(\Theta  u(t))dW(t),\quad  \;\; &\text{in} \;\; (0,T], \\
&dq(t)=\frac{\<f(u(t)), \partial_{t} u(t)\>_{L^2}}{2 \sqrt{F(u(t))+\delta_0}}dt, \;\; &\text{in} \;\; (0,T].
\end{aligned}\right.
\end{equation}

\begin{lm}
	Let Assumption \ref{ap2}  hold and $X_0\in \mathbb H^1$.  
	Then the averaged energy evolution law of $V(u(t),v(t),q(t)):=\frac 12\|u(t)\|^2_{\dot\H^1}+\frac 12\|v(t)\|^2_{L^2}
	+q(t)$ has the following form
	\begin{align}\label{aver-sav}
		&\E\Big[V(u(t),v(t),q(t))\Big]\nonumber\\
		=&\E \Big[V(u(0), v(0),q(0))\Big]
		+\frac 12\int_0^t \E \Big[{\rm{Tr}}
		(g(\Theta u(s))\Q^\frac 12(g(\Theta u(s))\Q^\frac 12)^*) \Big]ds,
	\end{align}
	\sly{where $t\in {[0,T]}.$} Furthermore, it holds that for $p\ge 2$ {and $0\leq s\leq t\leq T,$}
	\begin{align*}
		\E \Big[|q(t)-q(s)|^p\Big]&\le C(p,X_0,\Q,T)|t-s|^p,
	\end{align*}
	{ where  $C(p,X_0,\Q,T)$ is a positive constant depending on $p,X_0,\Q,$ and $T$.} 
\end{lm}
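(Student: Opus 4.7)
The plan is to derive both claims by applying It\^o's formula to the enlarged SAV system \eqref{mod;SAV}, together with the explicit integral representation of $q$ for the H\"older estimate.

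For the averaged energy law, I would apply It\^o's formula to $V(u,v,q)$. Since $du=v\,dt$ and $dq$ are both drift-only, the only quadratic-variation term comes from $\<v,dv\>_{L^2}$ and produces $\tfrac12 \mathrm{Tr}(g(\Theta u)\Q^{\frac12}(g(\Theta u)\Q^{\frac12})^*)dt$. The remaining drift cancellations are the two hallmark features of the SAV construction: first, the Hamiltonian pairing $\<u,v\>_{\dot\H^1}dt + \<v,\Lambda u\>_{L^2}dt = 0$ from integration by parts; second, the defining identity $q^2 = F(u)+\delta_0$ together with the chain rule $2q\,dq = \<f(u),v\>_{L^2}dt$, which compensates the nonlinear drift $-\tfrac{q\<v,f(u)\>_{L^2}}{\sqrt{F(u)+\delta_0}}\,dt = -\<v,f(u)\>_{L^2}\,dt$ contributed by the $v$-equation. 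Only the It\^o integral $\<v,g(\Theta u)\,dW\>_{L^2}$ then survives. Its expectation vanishes because $\sup_{s\in[0,T]}\E\|v(s)\|_{L^2}^2<\infty$ and $\|g(\Theta u)\Q^{1/2}\|_{\mathrm{HS}}$ has all moments uniform in $s$ by Lemma \ref{lm;eu} and Assumption \ref{ap2}, yielding \eqref{aver-sav}.

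For the temporal H\"older regularity of $q$, I would work directly from the integral representation
$$
q(t)-q(s) = \int_s^t \frac{\<f(u(r)),v(r)\>_{L^2}}{2\sqrt{F(u(r))+\delta_0}}\,dr,
$$
which is pathwise well defined since $\delta_0$ is chosen so that $F(u)+\delta_0 \ge c>0$. Minkowski's integral inequality in $L^p(\Omega)$, combined with Cauchy--Schwarz on the numerator, the bound $\|f(u)\|_{\dot{\mathbb H}}\le b_1(\|u\|_{\dot\H^1})$ from Assumption \ref{ap2}, and the uniform lower bound on the denominator, reduces the problem to establishing $\sup_{r\in[0,T]}\E\big[(b_1(\|u(r)\|_{\dot\H^1})\|v(r)\|_{L^2})^p\big] < \infty$. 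This is supplied by the uniform moment bound in Lemma \ref{lm;eu}. Integration in $r$ then gives $\|q(t)-q(s)\|_{L^p(\Omega)} \le C(t-s)$, which is the claim.

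The main obstacle is the moment bookkeeping rather than the It\^o expansion itself. Because $b_1$ has degree $2k+1$, the integrand in the representation of $q(t)-q(s)$ is only in $L^p(\Omega)$ provided $u$ has $L^{p(2k+1)+p}(\Omega;\dot\H^1)$-type moments, and one must check carefully that Lemma \ref{lm;eu} supplies these high-order moments via Assumption \ref{ap2}. The SAV normalization $q=\sqrt{F(u)+\delta_0}$ is precisely what tames the superlinear nonlinearity appearing in both the $dv$-equation and the denominator, and is exactly what permits the linear-in-$(t-s)$ H\"older bound rather than the weaker $\tfrac12$-H\"older regularity typical of components driven by a diffusion.
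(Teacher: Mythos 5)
Your treatment of the H\"older estimate for $q$ is essentially the paper's own argument: both start from the pathwise representation $q(t)-q(s)=\int_s^t \frac{\langle f(u(r)),v(r)\rangle_{L^2}}{2\sqrt{F(u(r))+\delta_0}}\,dr$, bound the numerator by $b_1(\|u(r)\|_{\dot\H^1})\|v(r)\|_{L^2}$ via Assumption \ref{ap2} and Cauchy--Schwarz, use the uniform lower bound $F(u)+\delta_0\ge c$ on the denominator, and close with the all-moments bound of Lemma \ref{lm;eu}; your remark about needing moments of order $p(2k+1)+p$ is exactly the bookkeeping the paper's one-line estimate silently relies on. For the energy law the routes differ: the paper simply observes that Proposition \ref{lm;eel} \emph{implies} \eqref{aver-sav}, because along the exact solution the SAV functional differs from $V_1$ only by the constant $\delta_0$, so no new It\^o computation is needed; you instead redo the It\^o expansion on the enlarged system \eqref{mod;SAV}. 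Your computation is the one that generalizes to the discrete setting (it is essentially the continuous analogue of the proof of Proposition \ref{ene-pre-1}), whereas the paper's shortcut is cheaper but only works because $q$ coincides with $\sqrt{F(u)+\delta_0}$ exactly on the continuous solution.

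One wrinkle you should be aware of: your cancellation uses $2q\,dq=\langle f(u),v\rangle_{L^2}\,dt$, i.e.\ the differential of $q^2$, to absorb the drift $-\langle v,f(u)\rangle_{L^2}\,dt$ coming from the $v$-equation. That is correct for the functional containing $q^2$ (which is what the discrete energy $V(u_n,v_n,q_n)=\frac12\|u_n\|^2_{\dot\H^1}+\frac12\|v_n\|^2_{L^2}+q_n^2$ uses, and what makes $V=V_1+\delta_0$). If one takes the lemma's displayed definition with $q$ appearing linearly at face value, the It\^o expansion produces $dq=\frac{\langle f(u),v\rangle_{L^2}}{2\sqrt{F(u)+\delta_0}}\,dt$, which does \emph{not} cancel $-\langle v,f(u)\rangle_{L^2}\,dt$, and the stated identity would fail; so your derivation is really a proof for the $q^2$ version, consistent with the discrete counterpart. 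This is an inconsistency in the statement rather than a defect of your argument, but a careful write-up should say explicitly which functional is being differentiated.
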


\begin{proof}
	Proposition \ref{lm;eel} implies the averaged  evolution law \eqref{aver-sav}. Notice that $$q(t)-q(s)=\int_s^t \frac {\<f(u(r)),v(r)\>_{L^2}}{\sqrt{F(u(r))+\delta_0}} dr.$$ 
	By using \sly{Assumption \ref{ap2},} H\"older's inequality and Lemma \ref{lm;eu}, we have  
	\begin{align*}
		\E \Big[|q(t)-q(s)|^p\Big]
		&\le \frac 1 {c^p} \int_{s}^t \E \Big[b_1^p(\|u(r)\|_{\dot{\H}^1})\|v(r)\|_{L^2}^p\Big]dr \le C(p,X_0,\Q,T)|t-s|^p,
	\end{align*}
	where $b_1$ is a polynomial.
\end{proof}

\subsection{Semi-implicit stochastic SAV {\color{black} scheme}s}
\label{sec;3}

Let $\tau$  be the time step-size, and denote $t_{n}=n\tau$ for $\sly{n\in\{1, \ldots,N\}}$ with $t_N=T.$ 
Let $u_n$ be the numerical approximation of $u(t_{n})$ for $n\in\{1, \ldots,N\}.$ We  
propose two novel kinds of {semi-implicit} energy-preserving numerical schemes via the SAV approach. 
In the rest of this paper, the constant $C$  may be different from line to line but never depending on $N$ and $\tau$.

The first natural choice is based on the Runge--Kutta type discretization and the SAV approach. 
\sly{To make it clear}, we employ the midpoint {\color{black} scheme} to approximate the linear unbounded part of stochastic system \eqref{mod;SAV} and obtain the midpoint SAV {\color{black} scheme}, i.e.,  
\begin{equation}
\label{nummet;SAV1}
\begin{aligned}
&u_{n+1}=u_n+\frac \tau 2(v_n+v_{n+1})+\frac \tau 2g(\Theta u_n)\delta W_n,\\
&v_{n+1}=v_n+\frac \tau 2\Lambda(u_n+u_{n+1})-\tau \frac{f(\widehat u_n)}{\sqrt{F(\widehat u_n)+\delta_0}}\frac{q_n+q_{n+1}}2+g(\Theta u_n)\delta W_n,\\
&q_{n+1}=q_n+\frac{\<f(\widehat u_n), u_{n+1}-u_n\>_{L^2}}{2 \sqrt{F(\widehat u_n)+\delta_0}}.
\end{aligned}
\end{equation}
Here,  $\widehat u_n=u_n$ \sly{(\text{or} $\frac{3u_n-u_{n-1}}2$  with $u_{-1}=u_0$)} , $n\in\{0,1, \ldots,N-1\},$ and $\delta W_n=W(t_{n+1})-W(t_n).$ 
We would like to remark that the modified term $\frac \tau 2g(\Theta u_n)\delta W_n$ is used to balance the diffusion coefficient and the stochastic auxiliary variable. Without this modification, the direct discretization of \eqref{mod;SAV} 
fails to  preserve the modified energy evolution law. 

An alternative way to construct semi-linear stochastic SAV {\color{black} scheme}s is inspired by the phenomenon that the exponential Runge--Kutta {\color{black} scheme}s often require less restriction for SPDEs (see, e.g., \cite{ACLW16,BCH18,CCHS20}) with regard to optimal strong convergence rate. By employing the exponential Euler {\color{black} scheme} to linear unbounded part of stochastic system \eqref{mod;SAV}, we get
\begin{equation}
\label{nummet;SAV2}
\begin{aligned}
&X_{n+1}=E(\tau)X_{n}+A^{-1}(E(\tau)-I)\Big(0,-\frac{f(\widehat u_n)}{\sqrt{F(\widehat u_n)+\delta_0}}\frac{q_n+q_{n+1}}2\Big)^\top\\
&\quad\quad\quad+
E(\tau)(0,g(\Theta u_n)\delta W_n)^\top,\\
&q_{n+1}=q_n+\frac{\<f(\widehat u_n), u_{n+1}-u_n\>_{L^2}}{2 \sqrt{F(\widehat u_n)+\delta_0}},
\end{aligned}
\end{equation}
where $X_{n+1}=(u_{n+1},v_{n+1})^\top,$ $n\in\{0,1,\ldots, N-1\}.$
We would like to mention that the proposed {\color{black} scheme}s \eqref{nummet;SAV1} and \eqref{nummet;SAV2} have a great potential  for approximating  other SPDEs, such as stochastic nonlinear Schr\"odinger equations and parabolic SPDEs. 

One main advantage of the proposed  {\color{black} scheme}s is that they can be solved efficiently. 
We take \eqref{nummet;SAV2} as an example to illustrate this fact. Notice that  \eqref{nummet;SAV2} can be rewritten as
\begin{align}
	u_{n+1}=&e_{11} u_{n}+e_{12} v_{n}-a^{1} \frac{f(\widehat u_n)}{\sqrt{F(\widehat u_n)+\delta_0}}\frac{q_n+q_{n+1}}2+e_{12}g(\Theta u_n)\delta W_n, \label{e1}\\
	v_{n+1}=&e_{21} u_{n}+e_{22} v_{n}-a^{2} \frac{f(\widehat u_n)}{\sqrt{F(\widehat u_n)+\delta_0}}\frac{q_n+q_{n+1}}2+e_{22}g(\Theta u_n)\delta W_n, \label{e2}\\
	q_{n+1}=&q_{n}+\frac{\<f(\widehat u_n), u_{n+1}-u_n\>_{L^2}}{2 \sqrt{F(\widehat u_n)+\delta_0}}\label{e3}
\end{align}
with $n\in\{0,1,\ldots, N-1\}$, $e_{11}=e_{22}=\mathcal C(\tau),$ $e_{12}=(-\Lambda)^{-\frac 12}\mathcal S(\tau),$ $e_{21}=-(-\Lambda)^{\frac 12}\mathcal S(\tau),$ $a^{1}=-(-\Lambda)^{-1}$ $(\mathcal C(\tau)-I)$  and $a^{2}=(-\Lambda)^{-\frac 12}\mathcal S(\tau).$ 
By eliminating $\frac{q_n+q_{n+1}}2$, we deduce 
\begin{align}
	\label{ite1}
	u_{n+1}+\gamma_n\< f(\widehat u_n), u_{n+1}\>_{L^2}=\Gamma_n,
\end{align}
where $\gamma_n=\frac{a^{1} f(\widehat u_n)}{4(F(\widehat u_n)+\delta_0)}$ and
\begin{align*}
	&\Gamma_n=e_{11} u_{n}+e_{12} v_{n}-a^{1} \frac{f(\widehat u_n)}{\sqrt{F(\widehat u_n)+\delta_0}}q_{n}+\gamma_n\< f(\widehat u_n), u_{n}\>_{L^2}+e_{12}g(\Theta u_n)\delta W_n.
\end{align*}
{Taking the inner product of \eqref{ite1} with $f(\widehat u_n),$ we} derive
\begin{align*}
	\left(1+\<f(\widehat u_n), \gamma_n\>_{L^2}\right)\< f(\widehat u_n), u_{n+1}\>_{L^2}=\< f(\widehat u_n), \Gamma_n\>_{L^2}.
\end{align*}
Here $\<f(\widehat u_n), \gamma_n\>_{L^2}\geq 0$ due to  $a^{1}=(-\Lambda)^{-1}(I-\mathcal C(\tau))$. Then it follows that
\begin{align*}
	\< f(\widehat u_n), u_{n+1}\>_{L^2}=\frac{\< f(\widehat u_n), \Gamma_n\>_{L^2}}{1+\<f(\widehat u_n), \gamma_n\>_{L^2}}.
\end{align*}
After solving $\< f(\widehat u_n), u_{n+1}\>_{L^2}$ from the linear system, $u_{n+1}$ is then updated from \eqref{ite1}. 
Subsequently, $q_{n+1}$ is obtained from \eqref{e3}. 
Finally, we get $v_{n+1}$ from \eqref{e2}.
Utilizing the similar procedures, one can {also} verify that $u_{n+1},$ $q_{n+1}$ and $v_{n+1}$ in \eqref{nummet;SAV1} can be solved  explicitly. 

\sly{Next we will show that the proposed schemes could overcome the divergent issue in the superlinear coefficient case and preserve the energy evolution law, which, together with the regularity estimates of the discrete auxiliary variable, plays the key role in the strong convergence analysis \ref{sec4}.}

\subsection{\sly{The preservation of energy evolution law}}
In this part, we address the important property of the proposed {\color{black} scheme}s, i.e., they could inherit the averaged energy evolution law of the SWE \eqref{mod;swe} (or \eqref{mod;swe1}). 
The rigorous proof of these results requires some regularization technique\sly{s}, like taking advantage of the spectral Galerkin approximation, such that the integration by parts formula {makes} sense, and then taking limit. For simplicity, we omit these tedious procedures.

\begin{prop}\label{ene-pre-1}
	Let Assumption \ref{ap2}  hold and $X_0\in \mathbb H^1$. 
	Numerical {\color{black} scheme}s \eqref{nummet;SAV1} and \eqref{nummet;SAV2} preserve the discrete averaged modified energy evolution law, for $n\in\{0,1,\ldots,N-1\},$
	\begin{align*}
		\mathbb E[V(u_{n+1},v_{n+1},q_{n+1})]=&\mathbb E[V(u_n,v_n,q_n)]+\frac \tau 2 \E \Big[{\rm{Tr}}
		\left(g(\Theta u_n)\Q^\frac 12(g( \Theta u_n)\Q^\frac 12)^*\right)\Big],
	\end{align*}
	where $V(u_n,v_n,q_n)=\frac 12\|u_n\|^2_{\dot\H^1}+\frac 12\|v_n\|^2_{L^2}+q_n^{2}.$
\end{prop}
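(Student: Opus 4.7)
The plan is to establish, pathwise for both schemes, the identity
\[
V(u_{n+1},v_{n+1},q_{n+1}) - V(u_n,v_n,q_n) = \langle v_n, g(\Theta u_n)\delta W_n\rangle_{L^2} + \tfrac12\|g(\Theta u_n)\delta W_n\|_{L^2}^2.
\]
Taking expectation then concludes the proof: $\E\langle v_n, g(\Theta u_n)\delta W_n\rangle_{L^2}=0$ by $\FFF_{t_n}$-measurability of $(u_n,v_n)$ and independence of $\delta W_n$ from $\FFF_{t_n}$, while the It\^o isometry yields $\E\|g(\Theta u_n)\delta W_n\|_{L^2}^2 = \tau\,\E[\tr(g(\Theta u_n)\Q^{1/2}(g(\Theta u_n)\Q^{1/2})^*)]$.

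For the midpoint SAV scheme \eqref{nummet;SAV1}, I would apply the polarization identity $\tfrac12\|a\|^2-\tfrac12\|b\|^2=\tfrac12\langle a+b,a-b\rangle$ to each of the three pieces of $V$ and substitute the update formulas. Self-adjointness of $\Lambda$ cancels the symplectic cross term coupling $\Lambda(u_n+u_{n+1})$ with $v_n+v_{n+1}$ between the $u$- and $v$-pieces, while the SAV relation for $q_{n+1}-q_n$ kills the nonlinear cross term coupling $\langle f(\widehat u_n), v_n+v_{n+1}\rangle$ with $q_n+q_{n+1}$ between the $v$- and $q$-pieces. Only noise contributions survive, and one more substitution $v_n+v_{n+1}=2v_n+(v_{n+1}-v_n)$ together with the same two cancellations reduces them to $\langle v_n, g(\Theta u_n)\delta W_n\rangle + \tfrac12\|g(\Theta u_n)\delta W_n\|^2$. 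The artificial modification $\tfrac{\tau}{2}g(\Theta u_n)\delta W_n$ in the $u$-update is essential at this last step: without it the noise contributions fail to rearrange into a perfect square, and the modified energy would not be preserved in expectation.

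For the exponential SAV scheme \eqref{nummet;SAV2}, a direct expansion of norms involving $E(\tau)$ mixes $\mathcal C(\tau)$, $\mathcal S(\tau)$ and fractional powers of $-\Lambda$ unpleasantly, so I would instead encode the update as the value at time $\tau$ of a linear ODE. Setting $\widehat Y_n:=(u_n, v_n + g(\Theta u_n)\delta W_n)^\top$ and $P_n:=-\frac{f(\widehat u_n)}{\sqrt{F(\widehat u_n)+\delta_0}}\frac{q_n+q_{n+1}}{2}$ and using $\int_0^\tau E(\tau-s)\,ds = A^{-1}(E(\tau)-I)$, the update shows that $Y_{n+1}:=(u_{n+1},v_{n+1})^\top$ equals $Y(\tau)$ for the ODE $Y'(t)=AY(t)+(0,P_n)^\top$ with $Y(0)=\widehat Y_n$. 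By Lemma \ref{lm;2}, $A$ is skew-adjoint on $\H^1$, so $\tfrac{d}{dt}\tfrac12\|Y(t)\|_{\H^1}^2 = \langle P_n, v(t)\rangle_{L^2}$. Integrating on $[0,\tau]$, using $\int_0^\tau v(t)\,dt = u_{n+1}-u_n$ together with the SAV identity $\langle f(\widehat u_n), u_{n+1}-u_n\rangle_{L^2} = 2\sqrt{F(\widehat u_n)+\delta_0}\,(q_{n+1}-q_n)$, gives $\tfrac12\|Y_{n+1}\|_{\H^1}^2 - \tfrac12\|\widehat Y_n\|_{\H^1}^2 = -(q_{n+1}^2-q_n^2)$, and expanding $\|\widehat Y_n\|_{\H^1}^2 = \|u_n\|_{\dot{\mathbb H}^1}^2 + \|v_n\|_{L^2}^2 + 2\langle v_n, g(\Theta u_n)\delta W_n\rangle + \|g(\Theta u_n)\delta W_n\|^2$ produces the pathwise identity above.

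The main obstacle I foresee is not the algebra but its rigorous justification on the infinite-dimensional space: to apply $\Lambda$ inside inner products, or to invoke skew-adjointness of $A$ on $\H^1$, one formally needs $u_n\in\DD(A)$, whereas under Assumption \ref{ap2} the mild solution only sits in $\dot{\mathbb H}^1$. As the authors indicate, the clean route is to carry out all the manipulations above on a spectral Galerkin truncation --- where every computation is genuinely finite-dimensional --- and then pass to the limit using continuity of $f,g$ and the regularity afforded by Lemma \ref{lm;eu}; once the setup is in place the cancellations themselves are routine.
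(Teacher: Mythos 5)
Your proposal is correct, and both schemes reduce to the same pathwise identity the paper itself establishes, namely $V(u_{n+1},v_{n+1},q_{n+1})-V(u_n,v_n,q_n)=\langle v_n,g(\Theta u_n)\delta W_n\rangle_{L^2}+\frac12\|g(\Theta u_n)\delta W_n\|_{L^2}^2$, after which the expectation step is identical. For the midpoint scheme \eqref{nummet;SAV1} your polarization-and-substitution computation is essentially the calculation the authors carry out (they only sketch it, stating the proof is ``similar''), including the observation that the modification $\frac{\tau}{2}g(\Theta u_n)\delta W_n$ is what lets the surviving noise terms collapse to the perfect-square form. For the exponential scheme \eqref{nummet;SAV2}, however, your route is genuinely different: the paper expands $X_{n+1}^\top M X_{n+1}-X_n^\top M X_n$ as a quadratic form with $M=\operatorname{diag}(-\Lambda,I)$, regroups terms using $MA^{-1}=J^{-1}$, and then invokes the unitarity of $E(\tau)$ (so $E(\tau)^\top M E(\tau)=M$) together with the skew-symmetry of $E(\tau)^\top M-ME(\tau)$ to kill everything except the noise contributions. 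You instead use $\int_0^\tau E(\tau-s)\,ds=A^{-1}(E(\tau)-I)$ to realize the one-step map as the time-$\tau$ flow of $Y'=AY+(0,P_n)^\top$ started from the noise-shifted data $\widehat Y_n$, and differentiate the energy along that flow using skew-adjointness of $A$; this replaces several pages of matrix bookkeeping with a one-line energy balance, and makes transparent why the exact-flow structure of the exponential integrator is what preserves the quadratic part. The only nitpick is your attribution of the skew-adjointness of $A$ on $\H^1$ to Lemma \ref{lm;2}: that lemma gives the trigonometric identity (equivalently, unitarity of $E(t)$ on $\H$), whereas the skew-adjointness you need is the elementary computation $\langle AX,X\rangle_{\H^1}=\langle(-\Lambda)^{1/2}v,(-\Lambda)^{1/2}u\rangle_{L^2}+\langle\Lambda u,v\rangle_{L^2}=0$; this is harmless but should be stated as such. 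Your closing remark on justifying the manipulations via a spectral Galerkin truncation matches exactly the regularization caveat the authors place before the proposition.
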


\begin{proof}
	To simplify the presentation, we only present the proof of  \eqref{nummet;SAV2} since that of  \eqref{nummet;SAV1} is similar. 
	{Fix $n\in\{0,1,\ldots,N-1\},$
		and denote}
	$M=\begin{bmatrix}
	-\Lambda & 0\\
	0 & I
	\end{bmatrix},$  
	$Id=\begin{bmatrix}
	I & 0\\
	0 & I
	\end{bmatrix},$ 
	$J=\begin{bmatrix}
	0 & I\\
	-I & 0
	\end{bmatrix},$ $\tilde g_n=\begin{bmatrix}
	0\\g(\Theta u_n)\delta W_n
	\end{bmatrix}$ and  
	$\tilde f_n=
	\begin{bmatrix}
	0\\
	-\frac{f(\widehat u_n)}{\sqrt{F(\widehat u_n)+\delta_0}}\frac{q_n+q_{n+1}}2
	\end{bmatrix}.$ 
	Then using the integration by parts formula, it follows that
	\begin{align}\label{integ}
		\frac 12 \|u_n\|_{\dot{\H}^1}^2+\frac 12 \|v_n\|_{L^2}^2
		=\frac 12 \int_\OO X_{n}^\top MX_{n}dx.
	\end{align} 
	Notice that 
	\begin{align*}
		&X_{n+1}^\top MX_{n+1}-X_n^\top MX_n\\
		=&X_n^\top(E(\tau)^\top ME(\tau)-M)X_n+2X_n^\top E(\tau)^\top M(E(\tau)-Id)A^{-1}\tilde f_n\\
		&+2X_n^\top E(\tau)^\top ME(\tau)\tilde g_n
		+(\tilde f_n)^\top (A^{-1})^\top(E(\tau)-Id)^\top M(E(\tau)-Id)A^{-1}\tilde f_n\\
		&+2(\tilde f_n)^\top (A^{-1})^\top(E(\tau)-Id)^\top ME(\tau)\tilde g_n+ (\tilde g_n)^\top E(\tau)^\top ME(\tau)\tilde g_n, 
	\end{align*}
	and 
	\begin{align*}
		&q_{n+1}^2-q_n^2=\frac{\<f(\widehat u_n), u_{n+1}\>_{L^2}}{\sqrt{F(\widehat u_n)+\delta_0}}\frac{q_n+q_{n+1}}2
		-\frac{\<f(\widehat u_n), u_{n}\>_{L^2}}{\sqrt{F(\widehat u_n)+\delta_0}}\frac{q_n+q_{n+1}}2\\
		=&\int_\OO (X_n)^\top (E(\tau)-Id)^\top J^{-1}\tilde f_n dx
		+\int_\OO (\tilde f_n)^\top (A^{-1})^\top(E(\tau)-Id)^\top J^{-1}\tilde f_n dx\\
		&+\int_\OO (\tilde g_n)^\top E(\tau)^\top  J^{-1}\tilde f_n dx.
	\end{align*}
	As a consequence, due to $MA^{-1}=J^{-1}$ and the fact that $\int_{\mathcal O} \tilde f_n^{\top} (A^{-1})^{\top} M E(\tau) \tilde g_n dx= \int_{\mathcal O} \tilde g_n^{\top} E(\tau)^{\top}M A^{-1} \tilde f_n dx$,  we obtain 
	\begin{align*}
		&\frac 12 \int_\OO (X_{n+1}^\top MX_{n+1}-X_n^\top MX_n)dx+q_{n+1}^2-q_n^2\\
		=&\frac 12 \int_\OO X_n^\top(E(\tau)^\top ME(\tau)-M)X_n dx+\frac 12\int_\OO(\tilde g_n)^\top E(\tau)^\top ME(\tau)\tilde g_ndx\\
		&+\int_\OO X_n^\top\left(E(\tau)^\top M(E(\tau)-Id)A^{-1}+(E(\tau)-Id)^\top J^{-1}\right)\tilde f_ndx\\
		&+\int_\OO X_n^\top E(\tau)^\top ME(\tau)\tilde g_ndx\\
		&+\frac 12\int_\OO(\tilde f_n)^\top (A^{-1})^\top(E(\tau)-Id)^\top M \left(E(\tau)+Id\right)A^{-1}\tilde f_n dx\\
		&+\int_\OO (\tilde f_n)^\top (A^{-1})^\top\left((E(\tau)-Id)^\top ME(\tau)+ME(\tau)\right)\tilde g_ndx.
	\end{align*}
	From  $(\tilde f_n)^\top(A^{-1})^\top M \tilde g_n=0$, it follows that
	\begin{align}\nonumber 
		&\frac 12 \int_\OO (X_{n+1}^\top MX_{n+1}-X_n^\top MX_n)dx+q_{n+1}^2-q_n^2\\\nonumber 
		=&\frac 12 \int_\OO (X_n+A^{-1}\tilde f+\tilde g_n)^\top (E(\tau)^\top ME(\tau)-M)(X_n+A^{-1}\tilde f+\tilde g_n)dx\\\label{evo-ene-as1}
		&+\frac 12\int_\OO(\tilde f_n)^\top (A^{-1})^\top(E(\tau)^\top M-ME(\tau))A^{-1}\tilde f_n dx\\\nonumber 
		&+\int_\OO X_n^\top M \tilde g_ndx+\frac 12\int_\OO(\tilde g_n)^\top  M\tilde g_ndx.
	\end{align}
	According to the unitary property of $E(\tau)$ and the 
	skew symmetry of the matrix $E(\tau)^\top M-ME(\tau),$ taking expectation leads to
	\begin{align*}
		&\mathbb E(\frac 12\|u_{n+1}\|^2_{\dot\H^1}+\frac 12\|v_{n+1}\|^2_{L^2}+q_{n+1}^{2})\\
		=&
		\mathbb E(\frac 12\|u_n\|^2_{\dot\H^1}+\frac 12\|v_n\|^2_{L^2}+q_n^{2})+\frac \tau 2{\rm{Tr}}
		\left(g(\Theta u_n)\Q^\frac 12(g(\Theta u_n)\Q^\frac 12)^*\right),
	\end{align*}
	which completes the proof.
\end{proof}

As a result of Proposition  \ref{ene-pre-1}, we are in a position to present a priori estimates of numerical solutions.

\begin{cor}\label{pri-num} Let Assumption \ref{ap2}  hold and $X_0\in \mathbb H^1$.  In addition suppose that 
	$\|g(\Theta u)\|_{\mathcal L_2({\bf {Q}}^{\frac 12} (\dot{\H}), \dot{\H})}\le c_2(1+\|u\|_{\dot{\H}^1})$ with $c_2>0.$
	Then it holds that for any $p\ge1$,
	\begin{align*}
		\sup_{n\le N}\E \big[ V^p(u_n,v_n,q_n)\big]\le  C(p,X_0,\Q,T),
	\end{align*}
	where $(u_n,v_n,q_n)$ is the numerical solution of  \eqref{nummet;SAV1} or \eqref{nummet;SAV2}, and $C(p,X_0,\Q,T)>0$.
\end{cor}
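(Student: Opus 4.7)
The plan is to upgrade the averaged energy law of Proposition \ref{ene-pre-1} to a pathwise identity and then extract $p$-th moment bounds via conditional Gaussian moment estimates combined with discrete Gronwall. Inspecting the derivation leading to \eqref{evo-ene-as1}, the unitarity $E(\tau)^\top M E(\tau) = M$ on $\H^1$ and the skew symmetry of $E(\tau)^\top M - M E(\tau)$ force the first two terms on its right-hand side to vanish \emph{pathwise}, not merely after taking expectation; the analogous cancellation is transparent in the direct computation underlying scheme \eqref{nummet;SAV1}. Consequently, writing $V_n := V(u_n,v_n,q_n)$, both schemes satisfy
\begin{equation*}
V_{n+1} = V_n + M_n + D_n, \quad M_n := \<v_n,\, g(\Theta u_n)\delta W_n\>_{L^2}, \quad D_n := \tfrac{1}{2}\|g(\Theta u_n)\delta W_n\|^2_{L^2},
\end{equation*}
where $D_n\ge 0$ and, conditionally on $\FFF_{t_n}$, $M_n$ is a centered Gaussian with variance bounded by $\tau \|v_n\|^2_{L^2}\|g(\Theta u_n)\Q^{1/2}\|^2_{\mathcal L_2}$.

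For an integer $p\ge 2$, I would expand $V_{n+1}^p = (V_n+M_n+D_n)^p$ by the binomial theorem and take conditional expectation with respect to $\FFF_{t_n}$. The linear-in-$M_n$ contribution vanishes by the martingale property, while Gaussian moment identities supply $\E[|M_n|^j\mid\FFF_{t_n}]\le C_j\tau^{j/2}\|v_n\|^j_{L^2}\|g(\Theta u_n)\Q^{1/2}\|^j_{\mathcal L_2}$ and $\E[D_n^j\mid\FFF_{t_n}]\le C_j \tau^j \|g(\Theta u_n)\Q^{1/2}\|^{2j}_{\mathcal L_2}$ for every $j\ge 1$. Combining these with the elementary bound $\|v_n\|^2_{L^2}\le 2V_n$ and the linear growth hypothesis $\|g(\Theta u_n)\Q^{1/2}\|^2_{\mathcal L_2}\le C(1+V_n)$, every summand of order $j\ge 2$ is controlled by $C_p\tau(1+V_n)^p$ (for $\tau$ bounded), while the $j=1$ drift term $pV_n^{p-1}\E[D_n\mid\FFF_{t_n}]$ is similarly dominated. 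Together these produce
\begin{equation*}
\E[V_{n+1}^p\mid\FFF_{t_n}] \le V_n^p + C_p\tau(1+V_n)^p \le (1+C_p'\tau)V_n^p + C_p'\tau.
\end{equation*}

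Taking full expectation and iterating over $n$, discrete Gronwall delivers $\sup_{n\le N}\E[V_n^p]\le e^{C_p' T}(\E[V_0^p]+C_p' T)$. The finiteness of $\E[V_0^p]$ follows from $X_0\in\H^1$ together with Assumption \ref{ap2}, since $q_0^2=F(u_0)+\delta_0$ is polynomially controlled by $\|u_0\|_{\dot{\H}^1}$ through the Sobolev embedding $\dot{\H}^1\hookrightarrow L^{2k+2}$ in the admissible range $d\le 3$. For non-integer $p\ge 1$, Lyapunov's inequality $\E[V_n^p]\le (\E[V_n^{\lceil p\rceil}])^{p/\lceil p\rceil}$ reduces the claim to the integer case already treated.

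The main obstacle is the second-order term of the binomial expansion, $\binom{p}{2}V_n^{p-2}\E[(M_n+D_n)^2\mid\FFF_{t_n}]$: the Gaussian variance of $M_n$ contributes an extra $\|v_n\|^2\le 2V_n$ factor that, naively, would spoil the $V_n^{p-1}$ scaling needed on the right-hand side. The key balance is $V_n^{p-2}\cdot V_n\cdot(1+V_n)\le C_p(1+V_n)^p$, which closes only because the $\|v_n\|^2$ factor is \emph{paired} with a single power of $\|g(\Theta u_n)\Q^{1/2}\|^2\le C(1+V_n)$, matching the exponent in the desired bound; any mismatch between these two growth rates would obstruct the Gronwall closure.
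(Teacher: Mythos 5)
Your proof is correct and follows essentially the same route as the paper: both start from the pathwise identity \eqref{evo-ene-as1} (equivalently $V_{n+1}=V_n+M_n+D_n$ with $M_n=\<v_n,g(\Theta u_n)\delta W_n\>_{L^2}$ and $D_n=\tfrac12\|g(\Theta u_n)\delta W_n\|_{L^2}^2$), expand the $p$-th power, annihilate the linear martingale term via the independent-increment property, bound the remaining terms by $C\tau(1+V_n)^p$ using the linear-growth hypothesis on $g$, and close with the discrete Gr\"onwall inequality. Your write-up merely makes explicit the conditional Gaussian moment estimates that the paper compresses into an appeal to the Burkh\"older, H\"older and Young inequalities.
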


\begin{proof}
	By \eqref{evo-ene-as1}, $\|g(\Theta u)\|_{\mathcal L_2( {\bf {Q}}^{\frac 12} (\dot{\H}), \dot{\H})}\le c_2(1+\|u\|_{\dot{\H}^1})$,  the Burkh\"older inequality, as well as H\"older's and Young's inequalities, we get that for $p\ge 1,$
	\begin{align*}
		&\quad\E [V^p(u_{n+1},v_{n+1},q_{n+1})]\\
		&\le \E [V^p(u_{n},v_{n},q_{n})](1+C(p)\tau)+p \E[V^{p-1}(u_{n},v_{n},q_{n})\<v_n,g(\Theta u_n)\delta W_n \>_{L^2}]\\
		&+C(p)\E\Big[ \<g(\Theta u_n)\delta W_n, g(\Theta u_n)\delta W_n\>_{L^2}^p\Big]
		+C(p)\tau \E \Big[ \|v_n\|_{L^2}^{2p}\Big]+C(p)\tau \E \Big[1+\|u_n\|_{\dot{\H}^1}^{2p}\Big]\\
		&\le \E [V^p(u_{n},v_{n},q_{n})](1+C(p)\tau)
		+C(p)\tau \E \Big[ \|v_n\|_{L^2}^{2p}\Big]+C(p)\tau \E \Big[1+\|u_n\|_{\dot{\H}^1}^{2p}\Big],
	\end{align*}
	where we have used the independent increment property of  $W(\cdot)$ and Proposition \ref{ene-pre-1} in the last inequality. 
	The discrete Gr\"onwall's inequality yields the desired result. 
\end{proof}

\section{Strong convergence rate of stochastic SAV {{\color{black} scheme}s}}
\label{sec4}
In this section, we provide a generic approach to study the strong convergence rates of the proposed stochastic SAV {\color{black} scheme}s \eqref{nummet;SAV1}-\eqref{nummet;SAV2}. 

\subsection{Properties of the discrete auxiliary variable}
In the following, we first show a priori estimates of {both $q_n$ and $u_n$ for ${n\in \{1,\ldots,N\}}$,} which are of vital importance in the study of strong error estimates {of the proposed stochastic energy-preserving {\color{black} scheme}s} via the SAV approach.

\begin{lm}
	\label{nu;holder}
	Under the condition of Corollary \ref{pri-num}, it holds that
	\begin{align*}
		{\sup\limits_{j\in\{0,1,\ldots, N-1\}}}\mathbb E[\|u_{j+1}- u_{j}\|_{L^2}^p]\leq C\tau^p,
	\end{align*}
	where $u_j$ is the numerical solution of \eqref{nummet;SAV1} or \eqref{nummet;SAV2}, $N\in \mathbb N^+, N\tau=T,$ and $C:=C(p,X_0,\Q,T)>0$.
\end{lm}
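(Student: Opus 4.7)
The plan is to treat the two schemes separately, in each case reading off $u_{n+1}-u_n$ directly from the scheme, splitting it into a drift-type contribution of size $O(\tau)$ in $L^2$ and a stochastic contribution whose $L^p(\Omega;L^2)$ norm is $O(\tau^{3/2})$, and then invoking the uniform moment bounds from Corollary \ref{pri-num} together with Lemma \ref{lm;prpE_n} and the Burkholder inequality. The final estimate will then be dominated by the drift contribution of size $C\tau$.

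For the midpoint scheme \eqref{nummet;SAV1}, the first line gives $u_{n+1}-u_n=\tfrac{\tau}{2}(v_n+v_{n+1})+\tfrac{\tau}{2}g(\Theta u_n)\delta W_n$, so by the triangle inequality
\begin{align*}
\mathbb E[\|u_{n+1}-u_n\|_{L^2}^p]\le C\tau^p\bigl(\mathbb E[\|v_n\|_{L^2}^p]+\mathbb E[\|v_{n+1}\|_{L^2}^p]\bigr)+C\tau^p\,\mathbb E[\|g(\Theta u_n)\delta W_n\|_{L^2}^p].
\end{align*}
The first two expectations are bounded by Corollary \ref{pri-num}, while the It\^o/Burkholder isometry combined with Assumption \ref{ap2} and Corollary \ref{pri-num} yields $\mathbb E[\|g(\Theta u_n)\delta W_n\|_{L^2}^p]\le C\tau^{p/2}$, so the noise term is even $O(\tau^{3p/2})$ and is absorbed into $C\tau^p$.

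For the exponential scheme \eqref{nummet;SAV2}, subtracting $u_n$ from \eqref{e1} yields
\begin{align*}
u_{n+1}-u_n=(\mathcal C(\tau)-I)u_n+(-\Lambda)^{-\tfrac12}\mathcal S(\tau)v_n+(-\Lambda)^{-1}(\mathcal C(\tau)-I)\tfrac{f(\widehat u_n)}{\sqrt{F(\widehat u_n)+\delta_0}}\tfrac{q_n+q_{n+1}}{2}+(-\Lambda)^{-\tfrac12}\mathcal S(\tau)g(\Theta u_n)\delta W_n.
\end{align*}
Lemma \ref{lm;prpE_n} with $\gamma=1$ gives $\|(\mathcal C(\tau)-I)(-\Lambda)^{-\tfrac12}\|_{\LL(\dot{\mathbb H})}\le C\tau$, and the elementary bound $\|(-\Lambda)^{-\tfrac12}\mathcal S(\tau)\|_{\LL(\dot{\mathbb H})}\le\tau$ controls the cosine/sine factors. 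Hence the first two terms are bounded in $L^p(\Omega;L^2)$ by $C\tau(\|u_n\|_{L^p(\Omega;\dot{\mathbb H}^1)}+\|v_n\|_{L^p(\Omega;L^2)})\le C\tau$ via Corollary \ref{pri-num}, and the noise term is handled exactly as in the midpoint case, giving the negligible $O(\tau^{3/2})$ contribution.

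The main obstacle is the auxiliary-variable drift term in \eqref{nummet;SAV2}, because $q_n$ is not pointwise bounded in $\omega$. Writing $(-\Lambda)^{-1}(\mathcal C(\tau)-I)=(\mathcal C(\tau)-I)(-\Lambda)^{-\tfrac12}\cdot(-\Lambda)^{-\tfrac12}$ and applying Lemma \ref{lm;prpE_n} gives an operator bound of $C\tau$ on $L^2$. Under Assumption \ref{ap1} (the setting used for the convergence analysis), the fraction $f(\widehat u_n)/\sqrt{F(\widehat u_n)+\delta_0}$ is controlled in $L^2$ by $C(\|\widehat u_n\|_{\dot{\mathbb H}^1}+1)$. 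A H\"older inequality in $\Omega$, combined with the uniform moment bounds on $\|u_n\|_{\dot{\mathbb H}^1}$ and $|q_n|$ delivered by Corollary \ref{pri-num}, then yields the desired $O(\tau)$ bound in $L^p(\Omega;L^2)$ for this summand, completing the proof that $\mathbb E[\|u_{j+1}-u_j\|_{L^2}^p]\le C\tau^p$ uniformly in $j$.
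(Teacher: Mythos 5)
Your proof is correct and follows essentially the same route as the paper: for \eqref{nummet;SAV2} the paper uses exactly your decomposition of $u_{j+1}-u_j$ into the $(\mathcal C(\tau)-I)u_j$, $(-\Lambda)^{-1/2}\mathcal S(\tau)v_j$, auxiliary-variable and noise pieces, bounds the corresponding operators by $C\tau$ (in fact $C\tau^2$ for the $f$-term) and closes with H\"older's inequality and Corollary \ref{pri-num}, while for \eqref{nummet;SAV1} the first line of the scheme gives the increment directly as you note. One small correction: the lemma is stated under the hypotheses of Corollary \ref{pri-num}, i.e.\ Assumption \ref{ap2}, so the fraction $f(\widehat u_n)/\sqrt{F(\widehat u_n)+\delta_0}$ should be bounded in $L^2$ by $c^{-1/2}\,b_1(\|\widehat u_n\|_{\dot{\mathbb H}^1})$ with $b_1$ a polynomial rather than by $C(1+\|\widehat u_n\|_{\dot{\mathbb H}^1})$ via Assumption \ref{ap1}; since the uniform moment bounds of Corollary \ref{pri-num} absorb any polynomial, nothing else in your argument changes.
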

\begin{proof}
	{Fix $j\in\{0,1,\ldots, N-1\}.$}
	Based on the properties that $|\cos(x)-1|\leq C|x|^2$ and $|\frac {\sin(x)}{x}|\le C$ for some $C>0,$ we have
	\begin{align*}
		&\|u_{j+1}- u_{j}\|_{L^2}\\
		\leq & 
		\|(\cos (\tau(-\Lambda)^{\frac 12}-I)u_{j}\|_{L^2}+\|(-\Lambda)^{-\frac 12}\sin(\tau (-\Lambda)^{\frac 12})v_{j}\|_{L^2}\nonumber\\
		&+\|(-\Lambda)^{-1}(\cos (\tau (-\Lambda)^{\frac 12}-I)\frac{f(\widehat{u}_{j})}{\sqrt{F(\widehat{u}_{j})+\delta_0}}\frac{q_j+q_{j+1}}2\|_{L^2}\nonumber\\
		&+\|(-\Lambda)^{-\frac 12}\sin(\tau(-\Lambda)^{\frac 12})g(\Theta u_{j})\delta W_{j}\|_{L^2}\\\nonumber
		\leq &C\tau (\|u_{j}\|_{\dot{\mathbb H}^1}+\|v_{j}\|_{L^2})+C\tau^2 \frac{\|f(\widehat{u}_{j})\|_{L^2}}{\sqrt{F(\widehat{u}_{j})+\delta_0}} (|q_{j}|+|q_{j+1}|)+\tau \|g(\Theta u_{j})\delta W_{j}\|_{L^2}.\nonumber
	\end{align*}
	Thanks to $\widehat u_j=u_j$ or $\frac {3u_j-u_{j-1}}2$, taking the $p$th moment, and using Corollary \ref{pri-num}, Young's and H\"older's inequalities, we \sly{obtain} 
	\begin{align*}
		&\quad\E \Big[\|u_{j+1}- u_{j}\|_{L^2}^p \Big]\\
		&\le C(p)\tau^p \Big(\E [\|u_{j}\|_{\dot{\mathbb H}^1}^p]+\E [\|v_{j}\|_{\dot{\mathbb H}}^p]+\E [b(\|u_j\|_{\dot{\H}^1},\|\widehat u_{j}\|_{\dot{\H}^1})]+\E[q_j^{2p}+q_{j+1}^{2p}]\Big)\\
		&\le C(p,X_0,\Q,T)\tau^p, 
	\end{align*}
	where $b$ is a polynomial. 
	This completes the proof.
\end{proof}

\begin{prop}\label{aux-prop} 
	{Let the condition of Corollary \ref{pri-num} hold and} $ |q_0-\sqrt{F(\widehat u_0)+\delta_0}|\le C\tau$. Suppose that $$|\<f'(u)v,w\>|\le b_5(\|u\|_{\dot{\H}^1},\|v\|_{\dot{\H}^1},\|w\|_{\dot{\H}^1})\|v\|_{L^2}^{\gamma_1}\|w\|_{L^2}^{\gamma_1}$$ for some $\gamma_1>\frac 12$ and some polynomial $b_5.$
	Then for  \eqref{nummet;SAV1} and \eqref{nummet;SAV2}, it holds that
	\begin{align*}
		{\sup\limits_{j\in\{1,\ldots, N\}}}\mathbb E\Big[|\sqrt{F(\widehat u_{j})+\delta_0}-q_{j}|^p\Big]\leq C\tau^{\min(1,2\gamma_1-1)p},
	\end{align*}
	where $C:=C(p,X_0,\Q,T)>0$, $N\in \mathbb N^+, N\tau=T.$
\end{prop}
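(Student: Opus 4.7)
The key observation is that the SAV update in \eqref{nummet;SAV1}--\eqref{nummet;SAV2} reads $q_{j+1} - q_j = \phi'(\widehat u_j)(u_{j+1} - u_j)$, where I set $\phi(u) := \sqrt{F(u) + \delta_0}$. Under Assumption \ref{ap2} we have $F(u) + \delta_0 \ge c > 0$ uniformly, so $\phi$ is $C^2$ with
\[
\phi'(u)(w) = \frac{\langle f(u), w\rangle_{L^2}}{2\phi(u)}, \qquad \phi''(u)(w_1, w_2) = \frac{\langle f'(u) w_1, w_2\rangle_{L^2}}{2\phi(u)} - \frac{\langle f(u), w_1\rangle_{L^2} \langle f(u), w_2\rangle_{L^2}}{4 \phi(u)^3}.
\]
The plan is to introduce the shifted error $\tilde e_j := q_j - \phi(u_j)$ and expand $\phi(u_{j+1}) - \phi(u_j)$ via Taylor's theorem with integral remainder to obtain the recursion
\[
\tilde e_{j+1} - \tilde e_j = \underbrace{\bigl(\phi'(\widehat u_j) - \phi'(u_j)\bigr)(u_{j+1} - u_j)}_{I_j} \;-\; \underbrace{\int_0^1 (1-s)\,\phi''(u_j + s(u_{j+1} - u_j))(u_{j+1} - u_j, u_{j+1} - u_j)\, ds}_{II_j},
\]
where $I_j \equiv 0$ in the first variant $\widehat u_j = u_j$, and in the second variant I rewrite $I_j = \int_0^1 \phi''(u_j + s(\widehat u_j - u_j))(\widehat u_j - u_j, u_{j+1} - u_j)\, ds$ with $\widehat u_j - u_j = \tfrac{1}{2}(u_j - u_{j-1})$.

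From the subquadratic hypothesis on $f'$, the polynomial bound $\|f(u)\|_{L^2}\le b_1(\|u\|_{\dot{\mathbb H}^1})$ in Assumption \ref{ap2}, and Cauchy--Schwarz, I derive the pointwise bilinear estimate
\[
|\phi''(u)(w_1, w_2)| \le \tilde b\bigl(\|u\|_{\dot{\mathbb H}^1}, \|w_1\|_{\dot{\mathbb H}^1}, \|w_2\|_{\dot{\mathbb H}^1}\bigr) \bigl(\|w_1\|_{L^2}^{\gamma_1} \|w_2\|_{L^2}^{\gamma_1} + \|w_1\|_{L^2} \|w_2\|_{L^2}\bigr)
\]
for some polynomial $\tilde b$. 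Hölder's inequality then separates $\tilde b$, whose arbitrary moments are under control via Corollary \ref{pri-num}, from the increment factors, and the one-step bound $\|u_{j+1} - u_j\|_{L^r(\Omega; L^2)} \le C\tau$ furnished by Lemma \ref{nu;holder} (applied to both $u_{j+1} - u_j$ and, when needed, $u_j - u_{j-1}$) yields $\|I_j\|_{L^p(\Omega)} + \|II_j\|_{L^p(\Omega)} \le C\tau^{\min(2\gamma_1,\, 2)}$.

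Since $\widehat u_0 = u_0$ in both variants, the hypothesis gives $\|\tilde e_0\|_{L^p(\Omega)} \le C\tau$. Applying Minkowski's inequality to the telescoping identity $\tilde e_j = \tilde e_0 + \sum_{i=0}^{j-1}(\tilde e_{i+1} - \tilde e_i)$ and using $j\tau \le T$ gives $\|\tilde e_j\|_{L^p(\Omega)} \le C\tau + CT\tau^{\min(2\gamma_1,\,2) - 1} \le C\tau^{\min(1,\, 2\gamma_1 - 1)}$. To convert from $\phi(u_j)$ back to $\phi(\widehat u_j)$, the mean-value inequality $|\phi(u_j) - \phi(\widehat u_j)| \le C\,b_1(\|u_j\|_{\dot{\mathbb H}^1})\|u_j - \widehat u_j\|_{L^2}$ combined with $\|u_j - \widehat u_j\|_{L^p(\Omega; L^2)} = O(\tau)$ contributes only an $O(\tau)$ correction, absorbed into the stated rate.

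The main obstacle will be controlling $I_j$ in the second variant $\widehat u_j = (3u_j - u_{j-1})/2$: without the subquadratic hypothesis on $f'$, a direct Cauchy--Schwarz produces only $O(\tau)$ per step, whose cumulative sum blows up to $O(1)$. The critical gain comes precisely from placing the $\gamma_1$-exponent on both $\|u_{j+1} - u_j\|_{L^2}$ and $\|\widehat u_j - u_j\|_{L^2}$, which upgrades each step to $O(\tau^{2\gamma_1})$ and, after summation over $j$, delivers exactly the $\min(1, 2\gamma_1 - 1)$-order.
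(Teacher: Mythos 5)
Your proposal is correct and follows essentially the same route as the paper: both arguments recognize that the SAV update reproduces the first-order Taylor term of $\sqrt{F+\delta_0}$, reduce the one-step drift of the error to a second-order remainder (your two components of $\phi''$ are exactly the paper's terms $A_1$ and $A_2$), bound it by $C\tau^{\min(2\gamma_1,2)}$ via the hypothesis on $f'$, Lemma \ref{nu;holder} and Corollary \ref{pri-num}, and telescope. The only difference is bookkeeping — you expand around $u_j$ and convert to $\widehat u_j$ at the end, which in fact treats the extrapolated case $\widehat u_j=\tfrac{3u_j-u_{j-1}}{2}$ slightly more transparently than the paper's direct computation.
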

\begin{proof}
	{Fix $j\in\{1,\ldots, N-1\}.$}
	The definitions of $q_j$ and $F(\widehat u_j)$, and the Taylor expansion yield that 
	\begin{align*}
		&\sqrt{F(\widehat u_{j+1})+\delta_0}-q_{j+1}\\=&\sqrt{F(\widehat u_j)+\delta_0}-q_j-\tau \frac{\<f(\widehat u_j), \frac{u_{j+1}-u_j}\tau \>_{L^2}}{2 \sqrt{F(\widehat u_j)+\delta_0}}
		+\frac{F(\widehat u_{j+1})-F(\widehat u_{j})}{\sqrt{F(\widehat u_{j+1})+\delta_0}+\sqrt{F(\widehat u_j)+\delta_0}}\\
		=&\sqrt{F(\widehat u_j)+\delta_0}-q_j-\tau \frac{\<f(\widehat u_j), \frac{u_{j+1}-u_j}\tau \>_{L^2}}{2 \sqrt{F(\widehat u_j)+\delta_0}}+\tau \frac{\<f(\widehat u_j), \frac{\widehat u_{j+1}-\widehat u_j}\tau \>_{L^2}}{\sqrt{F(\widehat u_j)+\delta_0}+\sqrt{F(\widehat u_{j+1})+\delta_0}}\\
		&+\tau \frac{\<\int_0^1f'(\widehat u_j+\theta(\widehat u_{j+1}-\widehat u_j)d\theta (\widehat u_{j+1}-\widehat u_j), \frac{\widehat u_{j+1}-\widehat u_j}\tau \>_{L^2}}{\sqrt{F(\widehat u_j)+\delta_0}+\sqrt{F(\widehat u_{j+1})+\delta_0}}.
	\end{align*}
	By applying H\"older's inequality, we obtain 
	\begin{align*}
		&|\sqrt{F(\widehat u_{j+1})+\delta_0}-q_{j+1}|\\
		\leq &
		|\sqrt{F(\widehat u_{j})+\delta_0}-q_{j}|\\
		&+\tau |\<f(\widehat u_j), \frac{\widehat u_{j+1}-\widehat u_j}\tau \>_{L^2}|\Big|
		\frac{\sqrt{F(\widehat u_{j+1})+\delta_0}-\sqrt{F(\widehat u_{j})+\delta_0}}{2(F(\widehat u_{j})+\delta_0)+2\sqrt{F(\widehat u_{j})+\delta_0}\sqrt{F(\widehat u_{j+1})+\delta_0}}
		\Big|\\
		&+\tau \frac{\Big|
			\<\int_0^1f'(\widehat u_j+\theta(\widehat u_{j+1}-\widehat u_j)d\theta (\widehat u_{j+1}-\widehat u_j), \frac{\widehat u_{j+1}-\widehat u_j}\tau \>_{L^2}
			\Big|}{\sqrt{F(\widehat u_j)+\delta_0}+\sqrt{F(\widehat u_{j+1})+\delta_0}}\\
		=:&|\sqrt{F(\widehat u_{j})+\delta_0}-q_{j}|+A_1+A_2.
	\end{align*}
	Due to the assumption of $f,$ there exists a polynomial $b$ such that 
	\begin{align*}
		A_1\leq& \tau |\<f(\widehat u_j),\frac{\widehat u_{j+1}-\widehat u_j}\tau \>_{L^2}|
		\Big|
		\frac{F(\widehat u_{j+1})-F(\widehat u_{j})}
		{
			2\sqrt{F(\widehat u_{j})+\delta_0}
			(\sqrt{F(\widehat u_{j+1})+\delta_0}
			+\sqrt{F(\widehat u_{j})+\delta_0})^2
		}
		\Big|\\
		\leq & C b(\|\widehat u_j\|_{\dot{\H}^1},\|\widehat u_{j+1}\|_{\dot{\H}^1})\|\widehat u_{j+1}-\widehat u_{j}\|_{L^2}^2.
	\end{align*}
	For the term $A_2,$ using $|\<f'(u)v,w\>|\le b_5(\|u\|_{\dot{\H}^1},\|v\|_{\dot{\H}^1},\|w\|_{\dot{\H}^1})\|v\|_{L^2}^{\gamma_1}\|w\|_{L^2}^{\gamma_1}$, it holds that
	\begin{align*}
		A_2\leq &C b_5(\|\widehat u_j\|_{\dot{\H}^1},\|\widehat u_{j+1}\|_{\dot{\H}^1})  \|\widehat u_{j+1}-\widehat u_j\|_{L^2}^{2\gamma_1}.
	\end{align*}
	In sum, 	
	\begin{align*}
		&|\sqrt{F(\widehat u_{j+1})+\delta_0}-q_{j+1}|\\
		\leq &
		|\sqrt{F(\widehat u_{j})+\delta_0}-q_{j}|
		+C\Big(b(\|\widehat u_j\|_{\dot{\H}^1},\|\widehat u_{j+1}\|_{\dot{\H}^1})+b_5(\|\widehat u_j\|_{\dot{\H}^1},\|\widehat u_{j+1}\|_{\dot{\H}^1})  \Big)\|\widehat u_{j+1}-\widehat u_{j}\|_{L^2}^{2\gamma_1}\\
		&\times (1+\|\widehat u_{j+1}-\widehat u_{j}\|_{L^2}^{2-2\gamma_1}).
	\end{align*}
	Taking the $p$th moment for $p\ge 1$, using Lemma  \ref{nu;holder}, Corollary \ref{pri-num} and the definition of $\widehat u_j$, we conclude that 
	\begin{align*}
		\|\sqrt{F(\widehat u_{j+1})+\delta_0}-q_{j+1}\|_{L^p(\Omega)}&\le \|q_0-\sqrt{F(\widehat u_0)+\delta_0}\|_{L^p(\Omega)}+C \sum_{k=0}^j \|\widehat u_{j+1}-\widehat u_{j}\|_{L^{4p\gamma_1}(\Omega)}^{2\gamma_1}\\
		&\le C\tau +C\tau^{2\gamma_1-1}\le C \tau^{2\gamma_1-1}.
	\end{align*}

\end{proof}

It can be seen that if Assumption \ref{ap1} holds, one could take $\gamma_1=1$ in Proposition \ref{aux-prop}. 

\subsection{Strong convergence analysis of stochastic SAV {{\color{black} scheme}s}}
Now, we are in a position to prove the strong convergence rate of the proposed stochastic SAV {\color{black} scheme}s under Assumption \ref{ap1}. 

\begin{tm}
	\label{tm;1}
	Let $X_0\in \mathbb H^1$ and Assumption \ref{ap1}  hold. Suppose that 
	\begin{align}\label{con-tm1}
		&\|(-\Lambda)^{-\frac 12}\left(f(u)-f(\widetilde u)\right)	\|_{L^2}+\left\|(-\Lambda)^{-\frac 12}\left(g(\Theta u)-g(\Theta \widetilde u)\right)	\right\|_{\LL_2(\mathbf Q^\frac 12(\dot{\H}),\dot{\H})}\le C\|u-\widetilde u\|_{L^2},
	\end{align}
	where $u,\widetilde u\in \dot{\H}^1.$
	Then for $p\geq1$, the {numerical scheme} \eqref{nummet;SAV2} satisfies 
	\begin{align}
		\sup\limits_{n\in \{1,\ldots,N\}}\E\left[\|X(t_n)-X_n\|_{\mathbb H}^{2p}\right]\leq C\tau^{2p},
	\end{align}
	where $C:=C(p,X_0,\Q,T)>0$, $N\in \N^+$,  $N\tau=T.$
\end{tm}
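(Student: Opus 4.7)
The plan is to compare the mild form of the exact solution with \eqref{nummet;SAV2} on each subinterval $[t_n,t_{n+1}]$ and close the analysis by a discrete Gr\"onwall argument. Using the semigroup identity $A^{-1}(E(\tau)-I)=\int_0^{\tau} E(s)\,ds$, I can recast \eqref{nummet;SAV2} as $X_{n+1}=E(\tau)X_n+\int_{t_n}^{t_{n+1}}E(t_{n+1}-s)\tilde f_n\,ds+\int_{t_n}^{t_{n+1}}E(\tau)\mathbb G(X_n)\,dW(s)$ with $\tilde f_n=(0,-\tfrac{f(\widehat u_n)}{\sqrt{F(\widehat u_n)+\delta_0}}\tfrac{q_n+q_{n+1}}{2})^\top$. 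Subtracting this from the mild representation of $X(t_{n+1})$ on the same interval, the error $e_{n+1}:=X(t_{n+1})-X_{n+1}$ satisfies $e_{n+1}=E(\tau)e_n+R_n^{\rm det}+R_n^{\rm sto}$, where $R_n^{\rm det}=\int_{t_n}^{t_{n+1}}E(t_{n+1}-s)[\mathbb F(X(s))-\tilde f_n]\,ds$ and $R_n^{\rm sto}=\int_{t_n}^{t_{n+1}}[E(t_{n+1}-s)\mathbb G(X(s))-E(\tau)\mathbb G(X_n)]\,dW(s)$.

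For the deterministic residual I decompose the integrand as $\mathbb F(X(s))-\tilde f_n=[\mathbb F(X(s))-\mathbb F(X(t_n))]+[\mathbb F(X(t_n))-\mathbb F(\widehat X_n)]+[\mathbb F(\widehat X_n)-\tilde f_n]$, where $\widehat X_n:=(\widehat u_n,v_n)^\top$. Condition \eqref{con-tm1} makes $\mathbb F$ Lipschitz from $\mathbb H$ to $\mathbb H$ through the $L^2$-norm of the first component only, so Proposition \ref{holder} (giving $\|u(s)-u(t_n)\|_{L^{2p}(\Omega;L^2)}\lesssim\tau$) handles the first bracket, and Lemma \ref{nu;holder} handles the second. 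The third, genuinely new, bracket admits the factorization $\mathbb F(\widehat X_n)-\tilde f_n=\Big(0,\ \tfrac{f(\widehat u_n)}{\sqrt{F(\widehat u_n)+\delta_0}}\Big[\tfrac{q_n+q_{n+1}}{2}-\sqrt{F(\widehat u_n)+\delta_0}\,\Big]\Big)^\top$; combining the uniform lower bound $F+\delta_0\ge c>0$, the linear growth of $f$ in Assumption \ref{ap1}, Corollary \ref{pri-num}, and Proposition \ref{aux-prop} with $\gamma_1=1$, this third bracket is of order $\tau$ in $L^{2p}(\Omega;L^2)$. For the stochastic residual, I split $E(t_{n+1}-s)\mathbb G(X(s))-E(\tau)\mathbb G(X_n)=E(t_{n+1}-s)[\mathbb G(X(s))-\mathbb G(X_n)]+[E(t_{n+1}-s)-E(\tau)]\mathbb G(X_n)$ and apply Burkholder--Davis--Gundy: the first piece is controlled by \eqref{con-tm1} together with Proposition \ref{holder}, and the second by Lemma \ref{lm;prpE_n} with $\gamma=1$ together with the $\mathbb H^1$-boundedness of $\mathbb G(X_n)$ from Assumption \ref{ap1} and Corollary \ref{pri-num}.

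Finally, expanding the discrete mild form $e_n=\sum_{k=0}^{n-1}E((n-k-1)\tau)(R_k^{\rm det}+R_k^{\rm sto})$, I bound the deterministic sum by Minkowski and the stochastic sum by treating $\sum_k E((n-k-1)\tau)R_k^{\rm sto}$ as a continuous-time It\^o integral and applying Burkholder--Davis--Gundy; both steps exploit the contractivity $\|E(t)\|_{\mathcal L(\mathbb H)}=1$ from Lemma \ref{lm;2}. This produces a discrete integral inequality of the form $\mathbb E\|e_n\|_{\mathbb H}^{2p}\le C\tau^{2p}+C\tau\sum_{k<n}\mathbb E\|e_k\|_{\mathbb H}^{2p}$, and discrete Gr\"onwall gives the stated bound. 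The main obstacle is the SAV discrepancy $\mathbb F(\widehat X_n)-\tilde f_n$, which has no analog in classical exponential-Euler error analyses; controlling it at the sharp order $\tau$ is precisely what Proposition \ref{aux-prop} was designed to do. A secondary subtlety is that the whole argument must be carried out in the weaker norm $\mathbb H=L^2\times\dot{\mathbb H}^{-1}$, for only in this norm does \eqref{con-tm1} promote $\mathbb F$ and $\mathbb G$ to $\mathbb H$-Lipschitz maps and only there can the first-order temporal regularity of $u$ (rather than the half-order regularity of $v$) be exploited to realize the sharp rate $\tau$.
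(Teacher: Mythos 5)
Your proposal is correct and follows essentially the same route as the paper: comparison of the discrete and continuous mild forms, splitting the residual into a semigroup-H\"older part, a coefficient-Lipschitz part measured in $\mathbb H=L^2\times\dot{\mathbb H}^{-1}$ via \eqref{con-tm1}, and the SAV discrepancy $\tfrac{q_n+q_{n+1}}{2}-\sqrt{F(\widehat u_n)+\delta_0}$ controlled by Proposition \ref{aux-prop} (with $\gamma_1=1$) together with Lemma \ref{nu;holder}, closed by discrete Gr\"onwall. The only cosmetic difference is that you merge the paper's terms $\MyRoman{2}_n^3$ and $\MyRoman{2}_n^4$ into a single bracket, and your remark that Lemma \ref{nu;holder} ``handles'' the bracket $\mathbb F(X(t_n))-\mathbb F(\widehat X_n)$ should be read as handling only the $\|u_n-\widehat u_n\|_{L^2}$ part, the $\|u(t_n)-u_n\|_{L^2}$ part being retained in the Gr\"onwall sum as your final inequality indeed does.
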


\begin{proof}
	We only present the proof for the case $p=1$ since the proof for other cases is similar. 	Fix $n\in\{1,\ldots,N\}.$  Notice that the solution of \eqref{nummet;SAV2} can be rewritten as
	\begin{equation}
	\begin{split}
	X_n=&E(t_n)X_0
	+\sum\limits_{j=0}^{n-1}
	E(t_{n-j})
	\begin{bmatrix}
	0\\
	g(\Theta u_j)\delta W_j
	\end{bmatrix}\\
	&+\sum\limits_{j=0}^{n-1}
	E(t_{n-1-j})A^{-1}(E(\tau)-I)
	\begin{bmatrix}
	0\\
	-\frac{f(\widehat u_j)}{\sqrt{F(\widehat u_j)+\delta_0}}\frac{q_j+q_{j+1}}2
	\end{bmatrix}.
	\end{split}
	\end{equation}
	Recall that the mild solution of \eqref{mod;swe} satisfies 	\begin{equation}
	\begin{split}
	X(t_{n})=E(t_n)X_0&+
	\sum\limits_{j=0}^{n-1}\int_{t_j}^{t_{j+1}}
	E(t_{n}-s)[
	\F(X(s))ds+\G(X(s))dW(s)].
	\end{split}
	\end{equation}
	Let $\varepsilon_i=X(t_{i})-X_{i}$ for $i\in\{0,1,\ldots,N\}$. 
	\sly{Then} 
	\begin{align*}
		\|\varepsilon_{n}\|_{\mathbb H}^{2}\leq
		&C\Big\|\sum\limits_{j=0}^{n-1}
		\int_{t_j}^{t_{j+1}}
		(E(t_{n}-s)-E(t_{n}-t_j))
		\begin{bmatrix}
			0\\
			g(\Theta u(s))dW(s)
		\end{bmatrix}
		\Big\|_{\mathbb H}^2
		\\
		&+C\Big\|\sum\limits_{j=0}^{n-1}
		\int_{t_j}^{t_{j+1}}
		E(t_{n}-t_j)\begin{bmatrix}
			0\\
			(g(\Theta u(s))-g(
			\Theta u_j))dW(s)
		\end{bmatrix}
		\Big\|_{\mathbb H}^{2}
		\\
		&+
		C\Big\|\sum\limits_{j=0}^{n-1}
		\int_{t_j}^{t_{j+1}}
		\Big(E(t_n-s)
		\begin{bmatrix}
			0\\
			-\frac{f(u(s))q(s)}{\sqrt{F(u(s))+\delta_0}}
			+\frac{f(\widehat u_j)}{\sqrt{F(\widehat u_j)+\delta_0}}\frac{q_j+q_{j+1}}2
		\end{bmatrix}
		\Big)
		ds\Big\|_{\mathbb H}^{2}\\
		=:&Err_{n,1}+Err_{n,2}+Err_{n,3}.
	\end{align*}
	Then by Assumption \ref{ap1},  Lemmas \ref{lm;eu} and \ref{lm;prpE_n}, we obtain
	\begin{align*}
		\mathbb E[Err_{n,1}]
		\leq &
		C \int_{0}^{t_n}
		\mathbb E\left\|\left(E(s-\left[\frac{s}{\tau}\right]\tau)-I\right)
		\begin{bmatrix}
			0\\
			g(\Theta u(s))
		\end{bmatrix}
		\right\|_{\LL_2(\mathbf Q^\frac 12(\H), \H)}^{2}
		ds\\
		\leq& C\sum\limits_{j=0}^{N-1} 
		\int_{t_j}^{t_{j+1}} 
		\big|s-\left[\frac{s}{\tau}\right]\tau\big|^{2}
		\mathbb E\left[\left\|g(\Theta u(s))
		\right\|_{\LL_2(\mathbf Q^\frac 12(\dot{\H}), \dot{\H})}^{2}\right]ds\leq C\tau^2. 
	\end{align*}
	Similar arguments, together with Burkholder--Davis--Gundy's inequality, yield that 
	\begin{align*}
		\mathbb E[Err_{n,2}]
		\leq & C
		\mathbb E \left\| 
		\int_{0}^{t_n}
		E\left(t_n-\left[\frac{s}{\tau}\right]\tau\right)\begin{bmatrix}
			0\\
			(g(\Theta u(s))-g(\Theta u([\frac{s}{\tau}]\tau))dW(s)
		\end{bmatrix}
		\right\|_{\mathbb H}^2\\
		&+C\mathbb E \left\| 
		\int_{0}^{t_n}
		E\left(t_n-\left[\frac{s}{\tau}\right]\tau\right)\begin{bmatrix}
			0\\
			(g(\Theta u([\frac{s}{\tau}]\tau))-g(\Theta u_{[\frac{s}{\tau}]})dW(s)
		\end{bmatrix}
		\right\|_{\mathbb H}^2\\
		\leq &
		C\int_{0}^{t_n}
		\mathbb E\left\|E(t_n-\left[\frac{s}{\tau}\right]\tau)
		\begin{bmatrix}
			0\\
			g(\Theta u(s))-g(\Theta u(\left[\frac{s}{\tau}\right]\tau)
		\end{bmatrix}
		\right\|_{\LL_2(\mathbf Q^\frac 12(\H), \H)}^2
		ds\\
		&+
		C\int_{0}^{t_n}
		\mathbb E\left\|E(t_n-\left[\frac{s}{\tau}\right]\tau)
		\begin{bmatrix}
			0\\
			g(\Theta u([\frac{s}{\tau}]\tau))-g(\Theta u_{[\frac{s}{\tau}]})
		\end{bmatrix}
		\right\|_{\LL_2(\mathbf Q^\frac 12(\H), \H)}^{2}
		ds.
	\end{align*}
	Thanks to \eqref{con-tm1} and Proposition \ref{holder}, we have
	\begin{align*}
		\mathbb E[Err_{n,2}]
		\leq &
		C
		\int_{0}^{t_n}
		\mathbb E\left\|(-\Lambda)^{-\frac 12}\left(g(u(s))-g\left(u\left(\left[\frac{s}{\tau}\right]\tau\right)\right)\right)
		\right\|_{\LL_2(\mathbf Q^\frac 12(\dot{\H}),\dot{\H})}^2ds\\
		&+C
		\int_{0}^{t_n}
		\mathbb E\left\|(-\Lambda)^{-\frac 12}\left(g\left(u\left(\left[\frac{s}{\tau}\right]\tau\right)\right)-g(u_{\left[\frac{s}{\tau}\right]})\right)
		\right\|_{\LL_2(\mathbf Q^\frac 12(\dot{\H}),\dot{\H})}^2ds\\
		\leq & C\sum\limits_{j=0}^{n-1}\int_{t_j}^{t_{j+1}}
		\mathbb E\|u(s)-u(t_j)\|_{L^2}^2ds
		+C\sum\limits_{j=0}^{n-1}\int_{t_j}^{t_{j+1}}
		\mathbb E\|u(t_j)-u_j\|_{L^2}^2ds
		\\
		\leq& C\tau^2+ 
		C 
		\tau \sum\limits_{j=0}^{n-1} \mathbb E
		\left\|
		u(t_j)-u_j
		\right\|_{L^2}^2.
	\end{align*}
	Now we turn to consider $Err_{n,3}.$ Based on Lemma \ref{lm;2}, we obtain
	{\small
		\begin{align}\label{EST;Err_n,3}
			&\mathbb E[Err_{n,3}]\\\nonumber
			\leq 
			&C\mathbb E\left[\sum\limits_{j=0}^{n-1}\int_{t_j}^{t_{j+1}}\left\| 
			f(u(s))-\frac{f(\widehat u_j)q_j}{\sqrt{F(\widehat u_j)+\delta_0}}
			\right\|_{\dot{\mathbb H}^{-1}}^2 ds\right]+C\mathbb E\left[\sum\limits_{j=0}^{n-1}\int_{t_j}^{t_{j+1}}\left\|\frac{f(\widehat u_j)(q_{j+1}-q_j)}{2\sqrt{F(\widehat u_j)+\delta_0}}
			\right\|_{\dot{\mathbb H}^{-1}}^2ds\right]\\ \nonumber 
			\leq & C\mathbb E\left[\sum\limits_{j=0}^{n-1}\int_{t_j}^{t_{j+1}}\left\| 
			f(u(s))-f(u(t_j))
			\right\|_{\dot{\mathbb H}^{-1}}^2 ds\right]+C\mathbb E\left[\sum\limits_{j=0}^{n-1}\int_{t_j}^{t_{j+1}}\left\| 
			f(u(t_j))-f(\widehat u_j)
			\right\|_{\dot{\mathbb H}^{-1}}^2 ds\right]\\\nonumber
			&+C\mathbb E\left[\sum\limits_{j=0}^{n-1}\int_{t_j}^{t_{j+1}}\left\| 
			f(\widehat u_j)-\frac{f(\widehat u_j)q_j}{\sqrt{F(\widehat u_j)+\delta_0}}
			\right\|_{\dot{\mathbb H}^{-1}}^2 ds\right]+C\mathbb E\left[\sum\limits_{j=0}^{n-1}\int_{t_j}^{t_{j+1}}\left\| 
			\frac{f(\widehat u_j)(q_{j+1}-q_j)}{\sqrt{F(\widehat u_j)+\delta_0}}
			\right\|_{\dot{\mathbb H}^{-1}}^2 ds\right]\\\nonumber
			=:&\MyRoman{2}_{n}^{1}+\MyRoman{2}_{n}^{2}+\MyRoman{2}_{n}^{3}+\MyRoman{2}_{n}^{4}.
		\end{align}
		\normalsize{A}}ccording  to  \eqref{con-tm1} and Proposition \ref{holder}, we derive
	\begin{align*}
		\MyRoman{2}_{n}^{1}
		\leq& C \mathbb E\left[\sum\limits_{j=0}^{n-1}\int_{t_j}^{t_{j+1}}\|u(s)-u(t_j)\|_{L^2}^2ds\right]\leq C\tau^2,\\
		\MyRoman{2}_{n}^{2}
		\leq & C\tau\sum\limits_{j=0}^{n-1}\mathbb E\left[\|u(t_j)-u_j\|_{L^2}^2\right]
		+C\tau \sum\limits_{j=0}^{n-1}\mathbb E\left[\|u_j-\widehat u_j\|_{L^2}^2\right].
	\end{align*}
	If $\widehat u_j=u_j,$ then $\MyRoman{2}_{n}^{2}\leq C \tau \sum\limits_{j=0}^{n-1}\mathbb E\left[\|u(t_j)-u_j\|_{L^2}^2\right].$ When $\widehat u_j=\frac{3u_j-u_{j-1}}2,$ thanks to $u_0=u_{-1},$ we have 
	\begin{align*}
		\MyRoman{2}_{n}^{2}
		\leq & C\tau \sum\limits_{j=0}^{n-1}\mathbb E\left[\|u(t_j)-u_j\|_{L^2}^2\right]
		+C\tau \sum\limits_{j=0}^{n-1}\mathbb E\left[\|u_{j}-u_{j-1}\|_{L^2}^2\right]\\
		\leq & C\tau \sum\limits_{j=0}^{n-1}\mathbb E\left[\|u(t_j)-u_j\|_{L^2}^2\right]
		+C\tau \sum\limits_{j=0}^{n-2}\mathbb E\left[\|u_{j+1}-u_{j}\|_{L^2}^2\right].
	\end{align*}
	For the term $\MyRoman{2}_{n}^{4},$  we obtain
	\begin{align*}
		\MyRoman{2}_{n}^{4}\leq & C \mathbb E\left[\sum\limits_{j=0}^{n-1}\int_{t_j}^{t_{j+1}}
		\frac{\|f(\widehat u_j)\|_{\dot{\mathbb H}^{-1}}^2}{F(\widehat u_j)+\delta_0}
		|q_{j+1}-q_j|^2ds\right]\\
		\leq & C\mathbb E\left[\sum\limits_{j=0}^{n-1}\int_{t_j}^{t_{j+1}}
		\frac{\|f(\widehat u_j)\|_{\dot{\mathbb H}^{-1}}^2}{(F(\widehat u_j)+\delta_0)^2}
		\|f(\widehat u_j)\|_{L^2}^2\|u_{j+1}-u_j\|_{L^2}^2
		ds\right].
	\end{align*}
	Since $\|f(\widehat u_j)\| \leq b_1(\|\widehat u_j\|_{\dot{\H}^1})$ and $F(\widehat u_j)+\delta_0\geq c>0,$ the H\"older's inequality, Lemma \ref{nu;holder} and Corollary \ref{pri-num} yield that 
	\begin{align*}
		\MyRoman{2}_{n}^{4}\leq C\tau \sum\limits_{j=0}^{n-1}\sqrt{\mathbb E\left[\|u_{j+1}-u_j\|_{L^2}^4\right]}\le C\tau^2.
	\end{align*}	
	By applying Proposition \ref{aux-prop}, Corollary \ref{pri-num} and H\"older's inequality, we get 
	\begin{align*}
		\MyRoman{2}_{n}^{3}\leq & C\mathbb E\left[\sum\limits_{j=0}^{n-1}\int_{t_j}^{t_{j+1}}\left\| 
		f(\widehat u_j)-\frac{f(\widehat u_j)q_j}{\sqrt{F(\widehat u_j)+\delta_0}}
		\right\|_{\dot{\mathbb H}^{-1}}^2 ds\right]\\
		\leq & C\mathbb E\left[\sum\limits_{j=0}^{n-1}\int_{t_j}^{t_{j+1}}
		\frac{\|f(\widehat u_j)\|_{\dot{\mathbb H}^{-1}}^2}{F(\widehat u_j)+\delta_0}
		|\sqrt{F(\widehat u_j)+\delta_0}-q_j|^2 ds\right]\\
		\leq &C\tau^2. 
	\end{align*}
	Combining the above estimates together, we arrive at
	\begin{align*}
		\mathbb \|\varepsilon_{n}\|_{\mathbb H}
		^2\leq& C\tau^2+ 
		C 
		\tau \sum\limits_{j=0}^{n-1} \mathbb E
		\left\|
		u(t_j)-u_j
		\right\|_{L^2}^2.
	\end{align*}
	By discrete Gr\"onwall's inequality and Lemma \ref{nu;holder}, we complete the proof.
\end{proof}

Next, we present the strong convergence rate of \eqref{nummet;SAV1}, which can be shown by using the similar steps as in Theorem \ref{tm;1},  \cite[Lemmas 4.1 and 4.2]{CHJS19} and Proposition \ref{aux-prop}.  We omit its proof for convenience.

\begin{tm}\label{tm-2}
	Let $X_0\in \mathbb H^\gamma, \gamma\in [1,2],$ and Assumption \ref{ap1}  hold. Suppose that 
	\begin{align}\nonumber
		&\|(-\Lambda)^{-\frac 12}\left(f(u)-f(\widetilde u)\right)	\|_{L^2}+\left\|(-\Lambda)^{-\frac 12}\left(g(\Theta u)-g(\Theta \widetilde u)\right)	\right\|_{\LL_2(\mathbf Q^\frac 12(\dot{\H}),\dot{\H})}\le C\|u-\widetilde u\|_{L^2},\\\label{con-tm2}
		& \|(-\Lambda)^{\frac {\gamma-1}2} f(u)\|_{L^2}+\|(-\Lambda)^{\frac {\gamma-1}2} g(u)\|_{\mathcal L_2(\mathbf Q^{\frac 12}(\dot{\H}),\dot{\H})} \le b_6 (\|u\|_{\dot{\H}^1}),
	\end{align}
	where $b_6$ is a polynomial, $u,\widetilde u\in \dot{\H}^1.$
	Then for $p\geq1$, the {numerical scheme} \eqref{nummet;SAV1} satisfies
	\begin{align}
		\sup\limits_{n\in \{1,\ldots,N\}}\E\left[\|X(t_n)-X_n\|_{\mathbb H}^{2p}\right]\leq C\tau^{p\gamma},
	\end{align}
	where $C:=C(p,X_0,\Q,T)>0$, $N\in \N^+$,  $N\tau=T.$
\end{tm}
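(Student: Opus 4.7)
The plan is to imitate the argument of Theorem \ref{tm;1}, but to replace the exact propagator $E(\tau)$ with the rational Cayley-type approximation $\mathbb{B}^{-1}(\tau)\mathbb{A}(\tau)$ that is hidden inside the midpoint discretization of the linear part of \eqref{mod;SAV}, where
$$
\mathbb{A}(\tau)=\begin{bmatrix} I & \tfrac{\tau}{2} I \\ \tfrac{\tau}{2}\Lambda & I \end{bmatrix},\qquad \mathbb{B}(\tau)=\begin{bmatrix} I & -\tfrac{\tau}{2} I \\ -\tfrac{\tau}{2}\Lambda & I \end{bmatrix}.
$$
First I would recast \eqref{nummet;SAV1} in the discrete variation-of-constants form
$$
X_{n+1}=\mathbb{B}^{-1}(\tau)\mathbb{A}(\tau) X_n+\mathbb{B}^{-1}(\tau)\begin{bmatrix} 0 \\ -\tau\frac{f(\widehat u_n)(q_n+q_{n+1})}{2\sqrt{F(\widehat u_n)+\delta_0}} \end{bmatrix}+\mathbb{B}^{-1}(\tau)\begin{bmatrix} \tfrac{\tau}{2} g(\Theta u_n)\delta W_n \\ g(\Theta u_n)\delta W_n \end{bmatrix},
$$
iterate this relation from $0$ to $n$, and subtract the mild formula \eqref{sol;swe} to obtain a directly comparable expression for $\varepsilon_n:=X(t_n)-X_n$.

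Next, I would decompose $\|\varepsilon_n\|_{\mathbb{H}}$ into the initial propagator mismatch $\|(E(t_n)-(\mathbb{B}^{-1}(\tau)\mathbb{A}(\tau))^n)X_0\|_{\mathbb{H}}$, a drift part $\mathrm{Err}^2_n$, and a stochastic part $\mathrm{Err}^1_n$. By \cite[Lemmas 4.1 and 4.2]{CHJS19} the initial term is bounded by $C\tau^{\gamma/2}\|X_0\|_{\mathbb{H}^\gamma}$, which is the source of the rate $\gamma$. For $\mathrm{Err}^2_n$ I would insert the intermediate quantities $f(u(t_j))$, $f(\widehat u_j)$ and $\frac{f(\widehat u_j)q_j}{\sqrt{F(\widehat u_j)+\delta_0}}$, so that each summand splits into four pieces: (i) the temporal regularity error $f(u(s))-f(u(t_j))$, handled via Proposition \ref{holder} and Assumption \ref{ap1}; (ii) the single-step mismatch $(E(\tau)-\mathbb{B}^{-1}(\tau))(0,f(u(s)))^{\top}$, of order $\tau^{\gamma/2}\|f(u(s))\|_{\dot{\mathbb{H}}^{\gamma-1}}$ thanks to the second inequality in \eqref{con-tm2}; (iii) the iterated mismatch $(E(t_{n-1-j})-(\mathbb{B}^{-1}(\tau)\mathbb{A}(\tau))^{n-1-j})\mathbb{B}^{-1}(\tau)(0,f(u(s)))^{\top}$, again of order $\tau^{\gamma/2}$ via the same lemmas; and (iv) the SAV error $f(\widehat u_j)-\frac{f(\widehat u_j)q_j}{\sqrt{F(\widehat u_j)+\delta_0}}$ together with $\frac{f(\widehat u_j)(q_{j+1}-q_j)}{2\sqrt{F(\widehat u_j)+\delta_0}}$, controlled by Proposition \ref{aux-prop} applied with $\gamma_1=1$ (which is licensed under Assumption \ref{ap1}) and Lemma \ref{nu;holder}, giving an $O(\tau^2)$ contribution per block after squaring.

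The stochastic part $\mathrm{Err}^1_n$ is treated with the same splitting combined with the Burkholder--Davis--Gundy inequality, plus the Lipschitz bound \eqref{con-tm1}, generating the recursive term $C\tau\sum_{j=0}^{n-1}\mathbb{E}\|u(t_j)-u_j\|_{L^2}^{2}$. The genuinely new contribution relative to Theorem \ref{tm;1} is the extra correction $\tfrac{\tau}{2} g(\Theta u_n)\delta W_n$ in the $u$-component of \eqref{nummet;SAV1}, which was introduced to enforce the modified energy law of Proposition \ref{ene-pre-1}; here I would use \cite[Lemma 4.1]{CHJS19} to bound $\|\mathbb{B}^{-1}(\tau)(\tfrac{\tau}{2} g(\Theta u_n)\delta W_n,0)^{\top}\|_{\mathbb{H}}$ by $C\tau\|g(\Theta u_n)\delta W_n\|_{\dot{\mathbb{H}}}$, so after squaring, taking expectation and summing in $j$ it contributes $O(\tau^2)$, which is absorbed by the target $\tau^\gamma$ for $\gamma\in[1,2]$. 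Assembling all pieces yields $\mathbb{E}\|\varepsilon_n\|_{\mathbb{H}}^{2}\le C\tau^{\gamma}+C\tau\sum_{j=0}^{n-1}\mathbb{E}\|u(t_j)-u_j\|_{L^2}^{2}$, and the discrete Gr\"onwall inequality closes the $p=1$ case; general $p$ follows by the same route combined with the higher-moment bounds in Corollary \ref{pri-num} and Proposition \ref{aux-prop}.

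The hard part, as in Theorem \ref{tm;1}, will be piece (iv): since $q_j$ does not admit a pointwise uniform bound, recovering the $\tau$ order requires carefully chaining the a priori moment estimate of Corollary \ref{pri-num}, the step-size H\"older continuity of Lemma \ref{nu;holder}, and the fluctuation estimate $\|\sqrt{F(\widehat u_j)+\delta_0}-q_j\|_{L^p(\Omega)}\le C\tau$ from Proposition \ref{aux-prop}. Balancing this $\tau$-order SAV error against the $\tau^{\gamma/2}$ rational-approximation error coming from \cite[Lemmas 4.1 and 4.2]{CHJS19} is what delivers the exact exponent $\tau^{p\gamma}$ in the statement and is the only step in the argument that is not a direct transcription of the proof of Theorem \ref{tm;1}.
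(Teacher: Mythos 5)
Your proposal is correct and follows essentially the same route the authors intend (and sketch): rewriting \eqref{nummet;SAV1} in discrete variation-of-constants form with the Cayley-type propagator $\B^{-1}(\tau)\AA(\tau)$, splitting the error into the initial mismatch $(E(t_n)-(\B^{-1}(\tau)\AA(\tau))^n)X_0$, a drift part and a stochastic part, invoking \cite[Lemmas 4.1 and 4.2]{CHJS19} for the rational-approximation errors of order $\tau^{\gamma/2}$, handling the SAV terms exactly as in Theorem \ref{tm;1} via Proposition \ref{aux-prop} (with $\gamma_1=1$) and Lemma \ref{nu;holder}, absorbing the extra $\frac{\tau}{2}g(\Theta u_n)\delta W_n$ correction as an $O(\tau^2)$ contribution, and closing with the discrete Gr\"onwall inequality. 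No gaps; this matches the argument the paper omits.
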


\begin{rk}
	\sly{ Schemes \eqref{nummet;SAV1} and \eqref{nummet;SAV2} can also be used to numerically solve the deterministic wave equation, which corresponds to the case of $g=0$. 
		In this case, the above convergence result of SAV {\color{black} scheme}s is also new. }
\end{rk}

\section{Semi-implicit energy-preserving {fully-discrete schemes}}
\label{sec5}
In this section, we combine the stochastic SAV {\color{black} scheme}s with a spatial finite element method (see, e.g., \cite{ACLW16,KLS10}) to propose implementary and energy-preserving fully-discrete schemes for \eqref{mod;swe1}. We first briefly recall the definition of the linear finite element method.
Let $\left\{\mathcal{T}_{h}\right\}$ be a quasi-uniform family of triangulations of the convex polygonal domain $\mathcal O$ with $h_{\mathrm{K}}=\operatorname{diam}(K)$ and $h=\max\limits_{\mathrm{K} \in \mathcal{T}_{h}} h_{\mathrm{K}} .$ 
Let $V_{h} \subset\dot{\mathbb H}^{1}$ be the
space of piecewise linear continuous functions with respect to $\mathcal{T}_{h}$ which are zero on the boundary of $\mathcal{D}$, and let $\mathcal{P}_{h}: {\dot{\H}} \rightarrow V_{h}$ denote the $\dot{\H}$-orthogonal projector and $\mathcal{R}_{h}: \dot{\mathbb H}^{1} \rightarrow V_{h}$ denote the $\dot{\mathbb H}^{1}$-orthogonal projector (Ritz projector). Thus,
\begin{align*}
	\left(\mathcal{P}_{h} v, w_{h}\right)=\left(v, w_{h}\right), \quad\left(\nabla \mathcal{R}_{h} u, \nabla w_{h}\right)=\left(\nabla u, \nabla w_{h}\right) \quad \forall \; v \in \dot{\H}, u \in \dot{\mathbb H}^{1}, w_{h} \in V_{h}.
\end{align*}
The discrete Laplace operator $\Lambda_{h}: V_{h} \rightarrow V_{h}$ is then defined by
\begin{align*}
	\left(\Lambda_{h} v_{h}, w_{h}\right)=-\left(\nabla v_{h}, \nabla w_{h}\right) \quad \forall\; w_{h} \in V_{h}.
\end{align*}
Notice that $\mathcal{R}_{h}=(-\Lambda_{h})^{-1} \mathcal{P}_{h} (-\Lambda)$ (see, e.g., \cite{KLS10}). We  define discrete norms and interpolation spaces by 
\begin{align*}
	\left\|v_{h}\right\|_{h, \alpha}=\left\|(-\Lambda_{h})^{\frac \alpha 2} v_{h}\right\|\quad \forall\; v_{h} \in V_{h}
\end{align*}
and $\dot{\mathbb H}_{h}^{\alpha}=V_{h}$ equipped with the norm $\|\cdot\|_{h, \alpha}$, {respectively}. 
Then the finite element method of \eqref{mod;swe1} becomes
\begin{equation}
\label{fem}
\left\{
\begin{aligned}
&d X^{h}(t)=A_{h} X^{h}(t) d t+\mathcal{P}_{h} \mathbb F\left(X^{h}(t)\right) d t+\mathcal{P}_{h} \mathbb G\left(X^{h}(t)\right) d W(t), \quad t>0,\\
&X^{h}(0)=X^{h, 0},
\end{aligned}
\right.
\end{equation} 
where $X^{h,0}=(u^{h,0},v^{h,0})^{\top},$ $u^{h,0}=\mathcal R_h u_0, v^{h,0}=\mathcal P_h v_0$,
\begin{align*}
	A_{h}:=
	\begin{bmatrix}
		0 & I \\
		\Lambda_{h} & 0
	\end{bmatrix}  \quad\text{and} \quad
	X^{h}:=\begin{bmatrix}
		u_{h} \\
		v_{h}
	\end{bmatrix}.
\end{align*}
Here the notation $\mathcal P_h \mathbb F(X^h)=(0,\mathcal P_h f(u^{h}))^{\top}$ and similarly for $\mathcal P_h \mathbb G(X^h).$ Like \eqref{sol;swe}, the mild form of  $X^h$  reads
\begin{align*}
	X^h(t)=E_h(t)X^{h,0}+\int_{0}^tE_h(t-s)\mathcal P^h\mathbb F(X^h(s))ds+\int_0^tE_h(t-s) \mathcal P^h \mathbb G(X^h(s))dW(s),
\end{align*}
where $E_h=\begin{bmatrix}
C_h(t) & (-\Lambda_h)^{-\frac 12} S_h(t)\\
-(-\Lambda_{h})^{\frac 12}S_h(t) & C_h(t)
\end{bmatrix}$ is a $\mathbb C_0$-semigroup generated by $A_h$ on $\mathbb H_h:=\dot{\H}^0_h\times \dot{\H}^{-1}_h$ with  $C_h(t)=\cos(t(-\Lambda_h)^{\frac 12})$ and  $S_h(t)=\sin(t (-\Lambda_h)^{\frac 12}).$ For \eqref{fem}, the corresponding energy is defined by
\begin{align*}
	H(X^{h})=\frac{1}{2}\left\|(-\Lambda_{h})^{\frac 12} u^{h}\right\|^{2}+\frac{1}{2}\left\|v^{h}\right\|^{2}+F(u^h), 
\end{align*}
since $\left\|\nabla u^{h}\right\|^2=\left\|(-\Lambda_h)^{\frac 12} u^{h}\right\|^2$.  By using the It\^o formula, 
one can obtain the following averaged energy evolution of the finite element solution $X^{h}$. 
Similar to the continuous case, the regularity estimate of the mild solution follows.

\begin{prop} \label{FEM-ene}
	Let  Assumption \ref{ap2}  hold and $X_0\in \H^1$.  
	The solution $X^{h}$ of the finite element approximation \eqref{fem} satisfies the averaged energy evolution law,
	\begin{align*}
		\mathbb{E}\left[H(X^{h}(t))\right]=&\mathbb{E}\left[H(X^{h}(0))\right]\\
		&+\frac{1}{2}\int_0^t \E\Big[ \operatorname{Tr}\left(\mathcal{P}_{h}g(\Theta u^h(s){\bf Q}^{\frac 12}) (\mathcal{P}_{h}g(\Theta u^h(s)){\bf Q}^{\frac 12})^*\right)\Big ]ds, \quad t \geq 0.
	\end{align*}
	Furthermore, suppose that $X_0\in \H^\beta$ for some $\beta\in [1,2]$ and that
	\begin{align}\label{con-fg1}
		&\|(-\Lambda)^{\frac {\beta-1}2} f(u)\|_{L^2}+\|(-\Lambda)^{\frac {\beta-1}2} g(\Theta u)\|_{\mathcal L_2(\mathbf Q^{\frac 12} (\dot{\mathbb H}), \dot{\mathbb H})}\le b_4(\|u\|_{\dot{\H}^1}), \; u \in \dot{\H}^\beta,
	\end{align}
	where  $b_4$ is a polynomial. Then it holds that for any $p\ge 2,$
	\begin{align*}
		\sup_{t\in [0,T]}\E \Big[\|X^h(t)\|_{\H_h^{\beta}}^p\Big]\le C(p,X_0,\Q,T),
	\end{align*}
	and for $0\le s\le t\le T,$
	\begin{align*}
		&\E \Big[\|u^h(t)-u^h(s)\|_{\dot{\H}_h^{\small 0}}^p\Big]\le C(p,X_0,\Q,T)|t-s|^p, \\
		&\E \Big[\|v^h(t)-v^h(s)\|_{\dot{\H}_h^{\small-1}}^p\Big]\le C(p,X_0,\Q,T)|t-s|^{\frac {p}2},
	\end{align*}
	{ where  $C(p,X_0,\Q,T)$ is a positive constant depending on $p,X_0,\Q,$ and $T$.} 
\end{prop}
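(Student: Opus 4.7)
The plan is to mirror the continuous-in-space arguments (Proposition \ref{lm;eel} and Proposition \ref{holder}) at the discrete level, exploiting the fact that the finite element operator $\Lambda_h$ inherits the essential structural properties of $\Lambda$: self-adjointness, negative definiteness, and the skew-symmetric block structure of $A_h$ relative to the energy inner product on $\mathbb H_h$.

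First, for the averaged energy evolution law, I would apply It\^o's formula to $H(X^h(t))$ using the SPDE \eqref{fem}. The variational derivatives are $D_{u^h}H = -\Lambda_h u^h + \mathcal P_h f(u^h)$ and $D_{v^h}H = v^h$; here the key identity is $\langle \mathcal P_h f(u^h), v^h\rangle = \langle f(u^h), v^h\rangle$ for $v^h\in V_h$, which makes sense because $f(u^h) = \widetilde F'(u^h)$. The drift contributions read
\begin{align*}
\langle -\Lambda_h u^h + \mathcal P_h f(u^h), v^h\rangle + \langle v^h, \Lambda_h u^h - \mathcal P_h f(u^h)\rangle = 0,
\end{align*}
so all deterministic terms cancel, and only the quadratic variation (trace) term of the stochastic integral survives, yielding the stated identity. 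The regularisation step used in the continuous case is unnecessary here because $V_h$ is finite-dimensional.

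Second, for the $\mathbb H_h^\beta$ moment bound, I would apply It\^o's formula to the Lyapunov functional
\begin{align*}
\Phi_\beta(X^h) := \tfrac12\|(-\Lambda_h)^{\beta/2} u^h\|^2 + \tfrac12\|(-\Lambda_h)^{(\beta-1)/2} v^h\|^2.
\end{align*}
The unbounded-drift part again cancels by the skew-symmetric structure of $A_h$ in the $\mathbb H_h^\beta$ inner product. The remaining drift and diffusion terms are controlled via the growth hypothesis \eqref{con-fg1}, using the stability $\|(-\Lambda_h)^{(\beta-1)/2}\mathcal P_h w\|\le C\|(-\Lambda)^{(\beta-1)/2}w\|$ for $\beta\in[1,2]$ (standard finite element estimate; see \cite{KLS10}). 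After taking $p$-th moments via the Burkholder--Davis--Gundy inequality and applying Young's and H\"older's inequalities, one obtains a closed Gr\"onwall-type estimate that bounds $\mathbb E[\Phi_\beta(X^h(t))^{p/2}]$ in terms of the initial data and $\int_0^t \mathbb E[\Phi_\beta(X^h(s))^{p/2}]\,ds$. The discrete Gr\"onwall inequality then yields the stated uniform-in-$h$ bound, since $\|X^{h,0}\|_{\mathbb H_h^\beta}$ is controlled by $\|X_0\|_{\mathbb H^\beta}$ through the projector stability $\|\mathcal R_h u_0\|_{h,\beta}\le C\|u_0\|_{\dot{\mathbb H}^\beta}$.

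Third, for the H\"older-in-time estimates, I would use the mild representation of $X^h$ and write
\begin{align*}
X^h(t) - X^h(s) = (E_h(t-s)-I)X^h(s) + \int_s^t E_h(t-r)\mathcal P_h \mathbb F(X^h(r))\,dr + \int_s^t E_h(t-r)\mathcal P_h \mathbb G(X^h(r))\,dW(r).
\end{align*}
Projecting onto the $u^h$-component and using $\|\mathcal C_h(t-s)-I\|_{\mathcal L(\dot{\mathbb H}_h^0)}\le C(t-s)$ and $\|(-\Lambda_h)^{-1/2}\mathcal S_h(\cdot)\|_{\mathcal L(\dot{\mathbb H}_h^0)}\le C(t-s)^0$ (these discrete analogues of Lemma \ref{lm;prpE_n} follow from spectral calculus on $-\Lambda_h$), together with the first moment bound for $\beta=1$, gives the order-one bound on $\|u^h(t)-u^h(s)\|_{\dot{\mathbb H}_h^0}^p$; the stochastic integral term appears here multiplied by $(-\Lambda_h)^{-1/2}\mathcal S_h(t-r)$ which yields an $O(t-s)$ contribution in the $\dot{\mathbb H}_h^0$-norm. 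For the $v^h$-component, the stochastic integral has no smoothing factor, so BDG together with the growth of $g$ yields only the $|t-s|^{1/2}$ rate in $\dot{\mathbb H}_h^{-1}$, as stated.

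The main obstacle is the second step: ensuring that the nonlinear terms on the right-hand side of the It\^o expansion for $\Phi_\beta(X^h)$ can be absorbed into $\Phi_\beta$ uniformly in $h$. The delicate point is controlling $\langle (-\Lambda_h)^{\beta-1}\mathcal P_h f(u^h), v^h\rangle$ and the Hilbert--Schmidt norm $\|(-\Lambda_h)^{(\beta-1)/2}\mathcal P_h g(\Theta u^h)\mathbf Q^{1/2}\|_{\mathcal L_2}$ by $b_4(\|u^h\|_{\dot{\mathbb H}^1})$ uniformly in $h$, which requires using the $h$-uniform stability of $\mathcal P_h$ in fractional norms for $\beta\in[1,2]$ and translating between the continuous norm $\|\cdot\|_{\dot{\mathbb H}^{\beta-1}}$ and the discrete norm $\|\cdot\|_{h,\beta-1}$ on $V_h$.
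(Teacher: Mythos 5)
Your proposal is correct and follows essentially the route the paper intends: the paper gives no written proof of Proposition \ref{FEM-ene}, stating only that the energy law follows from the It\^o formula (exactly your drift-cancellation computation, with the quadratic-variation term surviving) and that the regularity estimates follow ``similar to the continuous case'', i.e.\ Proposition \ref{holder} via the mild formulation, Burkholder--Davis--Gundy and Gr\"onwall, which is what you do for the H\"older bounds. The only minor variation is that for the $\mathbb H_h^\beta$ moment bound you use an It\^o/Lyapunov argument with $\Phi_\beta$ rather than the mild-solution route, but both work since $E_h(t)$ is unitary on $\mathbb H_h^\beta$, and your identification of the $h$-uniform stability $\|(-\Lambda_h)^{(\beta-1)/2}\mathcal P_h(-\Lambda)^{-(\beta-1)/2}\|\le C$ for $\beta\in[1,2]$ as the key technical ingredient is exactly right.
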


Next, we apply the SAV {\color{black} scheme} \eqref{nummet;SAV2}
to  \eqref{fem} and obtain the following fully-discrete scheme 
\begin{equation}
\label{nummet;SAV2-full}
\begin{aligned}
&X^h_{n+1}=E_h(\tau)X^h_{n}+A_h^{-1}(E_h(\tau)-I)\mathcal P_h \Big(0,-\frac{f(\widehat u^h_n)}{\sqrt{F(\widehat u^h_n)+\delta_0}}\frac{q_n+q_{n+1}}2\Big)^\top\\
&\quad\quad\quad+
E_h(\tau)\mathcal P_h(0,g(\Theta u^h_n)\delta W_n)^\top,\\
&q_{n+1}=q_n+\frac{(f(\widehat u_n^h), u_{n+1}^h-u_n^h)}{2 \sqrt{F(\widehat u_n^h)+\delta_0}}.
\end{aligned}
\end{equation}
Here $\widehat u_n^h=u_n^h$ \sly{(or $\frac {3u_n^h-u_{n-1}^h}2$, with $u_{-1}^h=u_0^h$)}, $T>0$, $N\in \N^+$,  $N\tau=T, n\le N$.
Following the procedures in the proof of Theorem \ref{tm;1}, we could get the following strong convergence rate result of \eqref{nummet;SAV2-full}. Below we only present a sketch proof due to the limitation of pages.

\begin{tm}\label{err-fem-sav}
	Under Assumption \ref{ap1}, let $T>0$, $N\in \N^+$,  $N\tau=T, n\le N$. Assume that $f,g$ satisfy \eqref{con-fg1}  and \eqref{con-tm1} with some $\beta\ge 1$,
	and that $X_0\in \mathbb H^{\gamma},$ $\gamma\ge 1+\frac 23 \beta.$
	Then the {numerical scheme} \eqref{nummet;SAV2-full} satisfies that for $p\ge 2,$
	\begin{align*}
		\E \Big[\|X^h_n-X(t_n)\|_{\mathbb H_h}^{p}\Big]\le C(p,X_0,\Q,T) (h^{\frac 23 \beta p}+\tau^p).
	\end{align*} 
\end{tm}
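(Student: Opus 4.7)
The plan is to split the total error into a spatial finite-element part and a temporal SAV part:
\begin{align*}
X(t_n)-X^h_n = \bigl(X(t_n)-X^h(t_n)\bigr)+\bigl(X^h(t_n)-X^h_n\bigr).
\end{align*}
For the spatial contribution I would invoke the known strong error estimate for the linear finite-element approximation \eqref{fem} of stochastic wave equations (see \cite{KLS10,ACLW16}). Combining the Ritz/$L^2$-projection estimates $\|(I-\mathcal R_h)u\|_{\dot{\mathbb H}}\le C h^{r}\|u\|_{\dot{\mathbb H}^{r}}$ with the mild forms of $X$ and $X^h$, the unitary bound $\|E(t)\|_{\LL(\mathbb H)}=1$, Assumption \ref{ap1}, the hypothesis \eqref{con-fg1}, and the regularity of the mild solution (Proposition \ref{holder}), one arrives at
\begin{align*}
\mathbb E\bigl[\|X(t_n)-X^h(t_n)\|_{\mathbb H_h}^{p}\bigr]\le C(p,X_0,\Q,T)\, h^{\frac 23\beta p}.
\end{align*}
The order $\tfrac{2}{3}\beta$ comes from the usual trade-off for the wave semigroup (which is not smoothing), implemented through interpolation between $\dot{\mathbb H}$ and $\dot{\mathbb H}^{\gamma}$; the hypothesis $\gamma\ge 1+\tfrac 23\beta$ is precisely what allows this interpolation to yield $h^{\frac 23\beta}$.

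For the temporal contribution I would mirror verbatim the proof of Theorem \ref{tm;1} inside the finite-dimensional space $\mathbb H_h$. Writing both $X^h(t_n)$ and $X^h_n$ in mild form through the discrete unitary semigroup $E_h$, the difference $X^h(t_n)-X^h_n$ decomposes into the same three error pieces $Err_{n,1}, Err_{n,2}, Err_{n,3}$ as in the semi-discrete case: (i) the time-regularity of $E_h$ applied to $\mathcal P_h\mathbb G$; (ii) the stochastic increment controlled via Burkholder--Davis--Gundy and the Lipschitz bound in Assumption \ref{ap1}; and (iii) the drift piece, which again splits into four subterms $\mathrm{II}_n^1,\dots,\mathrm{II}_n^4$ identical to \eqref{EST;Err_n,3}. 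To control these subterms at the fully-discrete level, I would establish the direct analogs of Lemma \ref{nu;holder} and Proposition \ref{aux-prop} for the fully-discrete sequence $(u^h_j,v^h_j,q_j)$: the former via the discrete analog of Lemma \ref{lm;prpE_n} and an a priori bound on $(u^h_j,v^h_j)$ in $\dot{\mathbb H}_h^1\times\dot{\mathbb H}_h^{0}$, and the latter by repeating word for word the Taylor-expansion argument of Proposition \ref{aux-prop}, which uses only scalar identities and Assumption \ref{ap1}.

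A key preparatory step is to verify a fully-discrete analog of Proposition \ref{ene-pre-1} for scheme \eqref{nummet;SAV2-full}, namely
\begin{align*}
\mathbb E\bigl[V_h(u^h_{n+1},v^h_{n+1},q_{n+1})\bigr]=\mathbb E\bigl[V_h(u^h_n,v^h_n,q_n)\bigr]+\tfrac\tau2\,\mathbb E\bigl[\tr(\mathcal P_h g(\Theta u^h_n)\Q^{\frac12}(\mathcal P_h g(\Theta u^h_n)\Q^{\frac12})^{*})\bigr],
\end{align*}
with $V_h(u^h,v^h,q)=\tfrac12\|u^h\|_{\dot{\mathbb H}_h^1}^{2}+\tfrac12\|v^h\|_{\dot{\mathbb H}_h^{0}}^{2}+q^{2}$. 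This follows by copying the algebraic manipulation in the proof of Proposition \ref{ene-pre-1} with $A,\Lambda,E(\tau)$ replaced by $A_h,\Lambda_h,E_h(\tau)$ and using the unitarity of $E_h(\tau)$ on $\mathbb H_h$. The energy identity then yields a priori moment bounds on $(u^h_j,v^h_j,q_j)$ uniform in both $h$ and $\tau$, which are what the proofs of Lemma \ref{nu;holder} and Proposition \ref{aux-prop} require.

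The main obstacle is ensuring that all polynomial-type bounds for $f,g$ translate into $h$-independent constants in the discrete setting; this relies on the stability of $\mathcal P_h$ and the equivalence of the norms $\|(-\Lambda_h)^{(\beta-1)/2}\mathcal P_h\cdot\|_{\dot{\mathbb H}_h}$ with the corresponding continuous norms under condition \eqref{con-fg1} for $\beta\in[1,2]$. Once this is in place, assembling the bounds $\mathrm{II}_n^1,\mathrm{II}_n^2\le C\tau^{2}+C\tau\sum_{j<n}\mathbb E\|u^h(t_j)-u^h_j\|_{\dot{\mathbb H}_h^0}^{2}$ and $\mathrm{II}_n^3,\mathrm{II}_n^4\le C\tau^{2}$ exactly as in Theorem \ref{tm;1} and applying the discrete Grönwall inequality yields
\begin{align*}
\mathbb E\bigl[\|X^h(t_n)-X^h_n\|_{\mathbb H_h}^{p}\bigr]\le C(p,X_0,\Q,T)\,\tau^{p}.
\end{align*}
The triangle inequality combined with the spatial estimate gives the claimed $h^{\frac 23\beta p}+\tau^{p}$ bound.
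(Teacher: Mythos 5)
Your proposal is correct and follows essentially the same route as the paper: the identical decomposition $X(t_n)-X^h_n=(X(t_n)-X^h(t_n))+(X^h(t_n)-X^h_n)$, with the spatial part handled by the known finite-element error estimate of order $h^{\frac23\beta}$ (via \cite{ACLW16}) and the temporal part by repeating the proof of Theorem \ref{tm;1} in $\mathbb H_h$ using the discrete sine/cosine operators. The paper only sketches this argument, so your additional detail on the fully-discrete energy identity, the a priori moment bounds, and the discrete analogs of Lemma \ref{nu;holder} and Proposition \ref{aux-prop} is a faithful elaboration of what the paper leaves implicit.
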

\begin{proof}
	
	We decompose $X^h_n-X(t_n)$ by 
	$$X^h_n-X(t_n)=X^h_n-X^h(t_n)+X^h(t_n)-X(t_n).$$
	The  estimate of $\E \Big[\|X^h(t_n)-X(t_n)\|_{\mathbb H_h}^{p}\Big]\le C(p,X_0,\Q,T) h^{\frac 23 \beta p}$ could be established by using similar arguments as in the proof of \cite[Theorem 3.1]{ACLW16}.
	For the {term} $X^h_n-X^h(t_n)$, one may follow same steps as in proving Theorem \ref{tm;1}, together with the properties of the discrete cosine and sine operators (see, e.g., \cite{ACLW16}), and obtain
	$$\E \Big[\|X^h_n-X^h(t_n)\|_{\mathbb H_h}^{p}\Big]\le C(p,X_0,\Q,T) \tau^{p}.$$
	Combining the above estimates, we complete the proof.
\end{proof}

Following {the above} approach, one could obtain the analogous result for the fully-discrete scheme based on  \eqref{nummet;SAV1} and linear finite element method, i.e.,
\begin{equation}
\label{nummet;SAV1-full}
\begin{aligned}
&u_{n+1}^h=u_n^h+\frac h2(v_n^h+v_{n+1}^h)+\frac h2 \mathcal P_h g(\Theta u^h_n)\delta W_n,\\
&v_{n+1}^h=v_n^h+\frac h2\Lambda_h(u_n^h+u_{n+1}^h)-h\frac{\mathcal P_hf(\widehat u_n^h)}{\sqrt{F(\widehat u_n^h)+\delta_0}}\frac{q_n+q_{n+1}}2+\mathcal P^h g(\Theta u_n^h)\delta W_n,\\
&q_{n+1}=q_n+\frac{(f(\widehat u_n^h), u^h_{n+1}-u^h_n)}{2 \sqrt{F(\widehat u_n^h)+\delta_0}},
\end{aligned}
\end{equation}
{where $\widehat u_n^h=u_n^h$ \sly{(or $\frac {3u_n^h-u_{n-1}^h}2$ with $u_{-1}^h=u_0^h$)}, $T>0$, $N\in \N^+$,  $N\tau=T, n\le N$.}
In the end of this section, we present the discrete energy evolution of \eqref{nummet;SAV2-full} and its weak error estimate. 

\begin{prop} \label{prop-weak}
	Let the condition of Theorem \ref{err-fem-sav} hold and $X_0\in \H^1$, $T>0$, $N\in \N^+$,  $N\tau=T, n\le N.$
	The proposed {\color{black} scheme} \eqref{nummet;SAV2-full} preserves the discrete averaged modified energy evolution law, i.e., for $n\in\{0,1,\ldots,N-1\},$
	\begin{align*}
		\mathbb E[\widetilde H(u_{n+1}^h,v_{n+1}^h,q_{n+1})]=&\mathbb E[\widetilde H(u_n^h,v_n^h,q_n)]+\frac \tau 2 \E \Big[{\rm{Tr}}
		\left(\mathcal P_h g(\Theta u_n^h)\Q^\frac 12(\mathcal P_hg( \Theta u_n^h)\Q^\frac 12)^*\right)\Big].
	\end{align*} 
	Here $\widetilde H(u_n^h,v_n^h,q_n)=\frac 12\|(-\Lambda_h)^{\frac 12}u_n^h\|^2+\frac 12\|v_n^h\|^2+q_n^{2}.$
	In addition, assume that 
	\begin{align}\label{weak-err}
		&\left\|\left(g(\Theta u)-g(\Theta \widetilde u)\right)\right\|_{\LL_2(\mathbf Q^\frac 12(\dot{\H}),\dot{\H})}\le C\|u-\widetilde u\|,\\\nonumber
		& \|(-\Lambda)^{\frac {\beta_1} 2}g(\Theta u)\|_{\LL_2(\mathbf Q^\frac 12(\dot{\H}),\dot{\H})}\le Cb_7(\|u\|_{\dot{\H}^{\beta_1}}),\quad  \beta_1\in [1,2].
	\end{align} 
	Then \eqref{nummet;SAV2-full} satisfies 
	\begin{align*}
		\Big|\E \Big[ \widetilde H(u_n^h,v_n^h,q_n)-V_1(u(t_n),v(t_n))\Big]\Big|\le C(p,T,X_0) (|err_0|+h^{\beta_1}+h^{\frac 23 \beta}+\tau),
	\end{align*} 
	where \sly{$\beta \in [1,3],$}  $err_0=\big|\widetilde H(u_0^h,v_0^h,q_0)-V_1(u(0),v(0))\big|.$ 
\end{prop}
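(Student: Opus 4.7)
The proof splits into two parts that mirror Proposition \ref{ene-pre-1} and the strong error analysis respectively.

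\textbf{Part 1: Discrete energy identity.} The plan is to repeat the computation behind Proposition \ref{ene-pre-1} verbatim inside the finite element subspace $V_h$. Set
$M_h := \begin{bmatrix} -\Lambda_h & 0 \\ 0 & I \end{bmatrix}$
and work with the operators $A_h$, $E_h(\tau)$ on $\mathbb H_h$. Since $\Lambda_h$ is self-adjoint and negative definite on $V_h$, the three algebraic ingredients used in \eqref{evo-ene-as1} carry over unchanged: $M_h A_h^{-1}=J^{-1}$, $E_h(\tau)$ is unitary on $\mathbb H_h$, and $E_h(\tau)^\top M_h - M_h E_h(\tau)$ is skew-symmetric. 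Expanding $\frac{1}{2}\int_{\mathcal O}(X_{n+1}^{h\top}M_h X_{n+1}^h - X_n^{h\top}M_h X_n^h)\,dx + (q_{n+1}^2-q_n^2)$ via \eqref{nummet;SAV2-full}, collecting terms exactly as between \eqref{integ} and \eqref{evo-ene-as1}, and then taking expectation, the $\mathcal F_{t_n}$-measurability of $(u_n^h,v_n^h,q_n)$ together with the independent increment property of $W$ kills all martingale cross-terms and leaves precisely the claimed trace expression.

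\textbf{Part 2: Weak error.} The plan is to subtract the continuous averaged energy evolution from Proposition \ref{lm;eel} from the discrete one just established. The solution-dependent parts telescope, producing
\[
\E\big[\widetilde H(u_n^h,v_n^h,q_n) - V_1(u(t_n),v(t_n))\big] = \mathbb E[\widetilde H(u_0^h,v_0^h,q_0)-V_1(u(0),v(0))]+\mathcal R_n,
\]
with
\[
\mathcal R_n = \frac{1}{2}\sum_{k=0}^{n-1}\int_{t_k}^{t_{k+1}} \E\Big[\|\mathcal P_h g(\Theta u_k^h)\mathbf Q^{\frac 12}\|_{\mathcal L_2}^2 - \|g(\Theta u(s))\mathbf Q^{\frac 12}\|_{\mathcal L_2}^2\Big]ds.
\]
The first term is $err_0$. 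For $\mathcal R_n$ I would apply the polarization identity $\|A\|_{\mathcal L_2}^2-\|B\|_{\mathcal L_2}^2=\langle A+B,A-B\rangle_{\mathcal L_2}$, use the uniform $\mathcal L_2$-bounds on $\mathcal P_h g(\Theta u_k^h)\mathbf Q^{1/2}$ and $g(\Theta u(s))\mathbf Q^{1/2}$ coming from Assumption \ref{ap1}, Corollary \ref{pri-num} and Proposition \ref{holder}, and then split $A-B$ by inserting $\mathcal P_h g(\Theta u(t_k))\mathbf Q^{1/2}$ and $g(\Theta u(t_k))\mathbf Q^{1/2}$. This produces three contributions: (i) the projection error $(\mathcal P_h-I)g(\Theta u(s))\mathbf Q^{1/2}$, bounded by $h^{\beta_1}$ via the second estimate in \eqref{weak-err}; (ii) $\mathcal P_h(g(\Theta u_k^h)-g(\Theta u(t_k)))\mathbf Q^{1/2}$, bounded by $C\,\E\|u_k^h-u(t_k)\|_{L^2}\lesssim h^{2\beta/3}+\tau$ via the first inequality of \eqref{weak-err} and Theorem \ref{err-fem-sav}; (iii) $\mathcal P_h(g(\Theta u(t_k))-g(\Theta u(s)))\mathbf Q^{1/2}$, bounded by $C\,\E\|u(s)-u(t_k)\|_{L^2}\lesssim\tau$ via \eqref{weak-err} and the Hölder estimate of Proposition \ref{holder}. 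Summing the resulting integrand bound of order $\tau(h^{\beta_1}+h^{2\beta/3}+\tau)$ over $k\le n\le T/\tau$ yields $|\mathcal R_n|\le C(h^{\beta_1}+h^{2\beta/3}+\tau)$, giving the claim.

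\textbf{Main obstacle.} The delicate point is (i): transferring the pointwise projection estimate for $\mathcal P_h$ to the Hilbert--Schmidt norm. I would write
$\|(\mathcal P_h-I)g(\Theta u)\mathbf Q^{\frac 12}\|_{\mathcal L_2} = \|(-\Lambda)^{-\beta_1/2}(\mathcal P_h-I)(-\Lambda)^{\beta_1/2}g(\Theta u)\mathbf Q^{\frac 12}\|_{\mathcal L_2}$
and invoke the standard bound $\|(-\Lambda)^{-\beta_1/2}(\mathcal P_h-I)\|_{\mathcal L(\dot{\H})}\le Ch^{\beta_1}$ together with $\|(-\Lambda)^{\beta_1/2}g(\Theta u)\mathbf Q^{\frac 12}\|_{\mathcal L_2}\le Cb_7(\|u\|_{\dot{\H}^{\beta_1}})$ from \eqref{weak-err} and the uniform moment bound of Proposition \ref{holder} (which requires $X_0\in\mathbb H^{\beta_1}$, an implicit assumption). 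A secondary concern is that the algebraic manipulation in Part 1 requires regularity to justify the integration-by-parts; as noted before Proposition \ref{ene-pre-1}, this is handled by a spectral Galerkin regularization and a limit passage, which carries over verbatim to the finite element setting.
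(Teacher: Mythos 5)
Your proposal is correct and follows essentially the same route as the paper: the discrete energy identity is obtained by rerunning the algebra of Proposition \ref{ene-pre-1} with $\Lambda_h$, $A_h$, $E_h(\tau)$ in $V_h$, and the weak error is obtained by subtracting the two averaged energy evolution laws and bounding the resulting trace difference via a difference-of-squares (polarization) decomposition into a projection error of order $h^{\beta_1}$ plus a term controlled by $\E\|u^h_{[s/\tau]}-u(s)\|_{L^2}^2$, estimated through \eqref{weak-err}, Proposition \ref{holder} and Theorem \ref{err-fem-sav}. The only cosmetic difference is that you split the second contribution into a grid-point strong error and a time-H\"older piece, whereas the paper keeps them together; your remark that the $\dot{\H}^{\beta_1}$ moment bound implicitly requires $X_0\in\mathbb H^{\beta_1}$ is a fair observation that the paper leaves tacit.
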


\begin{proof}
	Following similar arguments as in the proof of Proposition \ref{ene-pre-1}, one can get the discrete energy evolution of \eqref{nummet;SAV2-full},
	\begin{align*}
		\mathbb E[\widetilde H(u_{n+1}^h,v_{n+1}^h,q_{n+1})]=&\mathbb E[\widetilde H(u_n^h,v_n^h,q_n)]+\frac \tau 2 \E \Big[{\rm{Tr}}
		\left(\mathcal P^h g(\Theta u^h_n)\Q^\frac 12 (\mathcal P_h g( \Theta u_n)\Q^\frac 12)^*\right)\Big],
	\end{align*}
	which {implies} the first assertion. 
	According to \eqref{evo-law-ave}, we have 
	\begin{align*}
		&\E \Big[ \widetilde H(u_n^h,v_n^h,q_n)-V_1(u(t_n),v(t_n))\Big]\\=&err_0+\frac 12\int_0^{t_n}\E \Big[{\rm{Tr}}
		\left(\mathcal P_h g(\Theta {u_{[\frac s \tau]}^h})\Q^\frac 12(\mathcal P_hg( \Theta {u_{[\frac s \tau]}^h})\Q^\frac 12)^*\right)\\
		&- {\rm{Tr}}
		\left( g(\Theta u(s))\Q^\frac 12(g( \Theta u(s))\Q^\frac 12)^*\right)\Big] ds.
	\end{align*}
	From \eqref{weak-err}, Proposition \ref{holder}, Corollary \ref{pri-num}, H\"older's inequality  and $\|(I-P_h)w\|\le C h^{\beta} \|w\|_{\dot{\H}^{\beta}}, \beta\in [1,2] $, it follows that
	\begin{align*}
		&\quad\Big|\E \Big[ \widetilde H(u_n^h,v_n^h,q_n)-V_1(u(t_n),v(t_n))\Big]\Big|\\
		&\le |err_0|
		+C{\int_{0}^{t_n}} \Big|\E \Big[{\rm{Tr}}
		\left((I-\mathcal P^h)g(\Theta u(s))\Q^\frac 12((I+\mathcal P_h)g( \Theta u(s))\Q^\frac 12)^*\right)\Big]\Big|ds\\
		&+C{\int_{0}^{t_n}} \Big|\E \Big[{\rm{Tr}}
		\left( \mathcal P^h \big(g(\Theta u(s))-g(\Theta {u_{[\frac s \tau]}^h})\big)\Q^\frac 12( \mathcal P_h \big(g(\Theta u(s))+g({u_{[\frac s \tau]}^h})\big)\Q^\frac 12)^*\right)\Big]\Big|ds\\
		&\le  |err_0|+ C{\int_{0}^{t_n}} \E \Big[ \Big\|(I-\mathcal P_h) g( \Theta u(s))\Big\|_{\LL_2(\mathbf Q^\frac 12(\dot{\H}),\dot{\H})} \Big\|g( \Theta u(s))\Big\|_{\LL_2(\mathbf Q^\frac 12(\dot{\H}),\dot{\H})} \Big]ds\\
		&+ C{\int_{0}^{t_n}} \E \Big[ \Big\| g( \Theta u(s))-g( \Theta {u_{[\frac s \tau]}^h})\Big\|_{\LL_2(\mathbf Q^\frac 12(\dot{\H}),\dot{\H})} \Big\|g( \Theta u(s))+g( \Theta {u_{[\frac s \tau]}^h})\Big\|_{\LL_2(\mathbf Q^\frac 12(\dot{\H}),\dot{\H})} \Big]ds\\
		&\le  |err_0|+Ch^{\beta_1}+\int_0^{t_n} \sqrt{\E\Big[ \|{u_{[\frac s \tau]}^h}-u(s)\|^2\Big]}ds.
	\end{align*}
	Applying the temporal regularity estimate in Proposition \ref{holder} and Theorem \ref{err-fem-sav}, we conclude that 
	\begin{align*}
		\Big|\E \Big[ \widetilde H(u_n^h,v_n^h,q_n)-V_1(u(t_n),v(t_n))\Big]\Big|
		&\le  |err_0|+Ch^{\beta_1}+C(\tau+h^{\frac 23\beta}).
	\end{align*}
\end{proof}

It can be seen that the weak error of the energy for
\eqref{nummet;SAV2-full} is only determined by the spatial discretization in the additive noise case since there is no weak error in the {temporal} direction in this case (see Proposition \ref{ene-pre-1}). 

\section{Numerical experiments}
\label{sec6}

This section presents numerical experiments to illustrate {the strong convergence order and \sly{the preservation of  energy evolution law} of the proposed numerical schemes \eqref{nummet;SAV2-full} and \eqref{nummet;SAV1-full} with $\hat u_n=u_n,$ $n\in\{0,1,\ldots, N\},$} for 1-dimensional nonlinear SWE under the homogeneous Dirichlet boundary condition,
\begin{equation} \label{eq6.2}
\left\{\begin{aligned}
&du = vdt,  &&(x, t) \in (0, 1) \times (0,T],\\
&dv = u_{xx} dt - f(u) dt +g(u) dW(t), &&(x, t) \in (0, 1) \times (0,T],\\
&u(x,0)=\sin(\pi x),\quad v(x,0)=0.
\end{aligned} \right. \end{equation}

\begin{figure}[h]
	\centering
	\subfigure{
		\begin{minipage}{5cm}
			\centering
			\includegraphics[height=4cm,width=5cm]{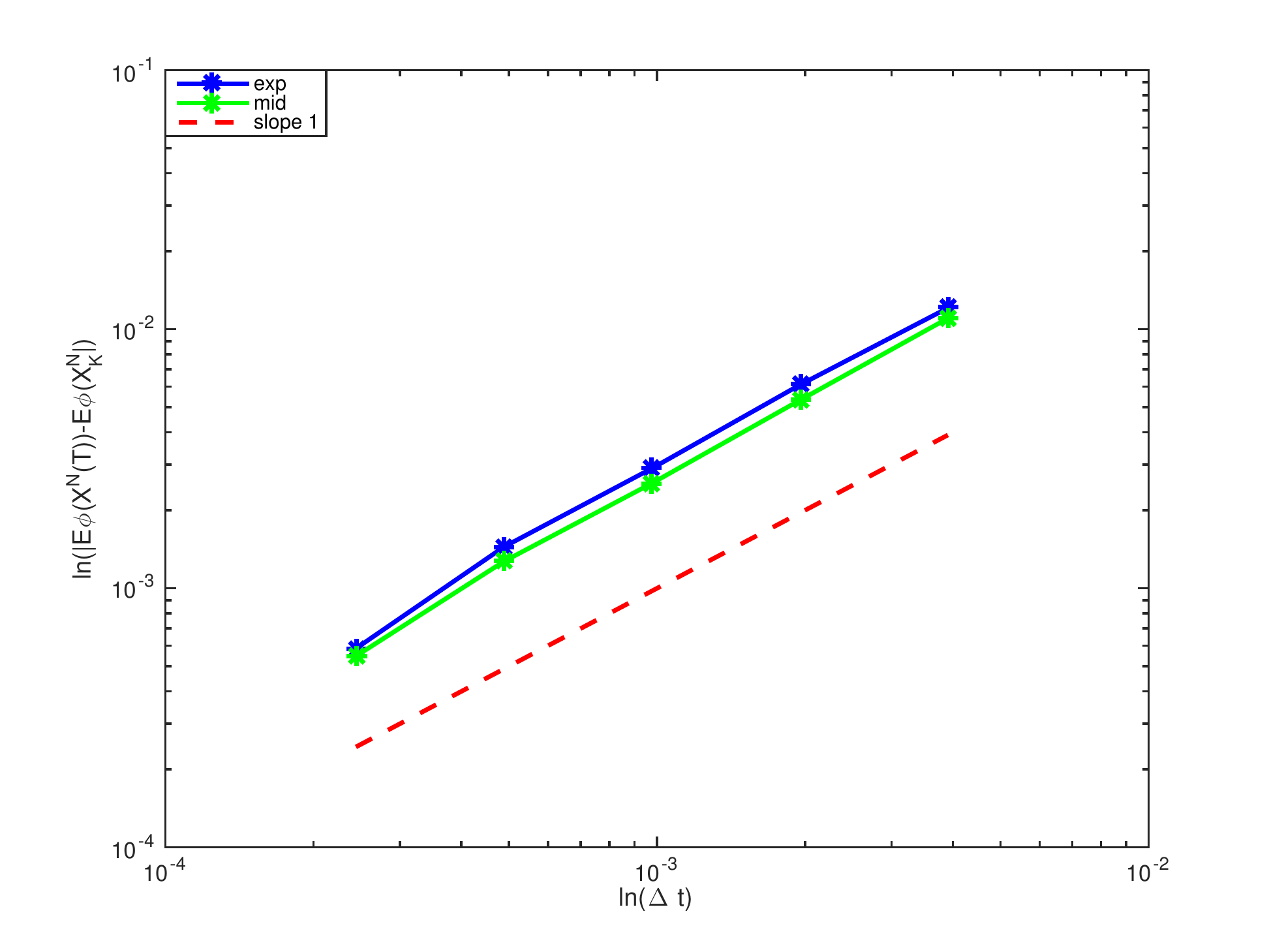}
		\end{minipage}
	}
	\subfigure{
		\begin{minipage}{5cm}
			\centering
			\includegraphics[height=4cm,width=5cm]{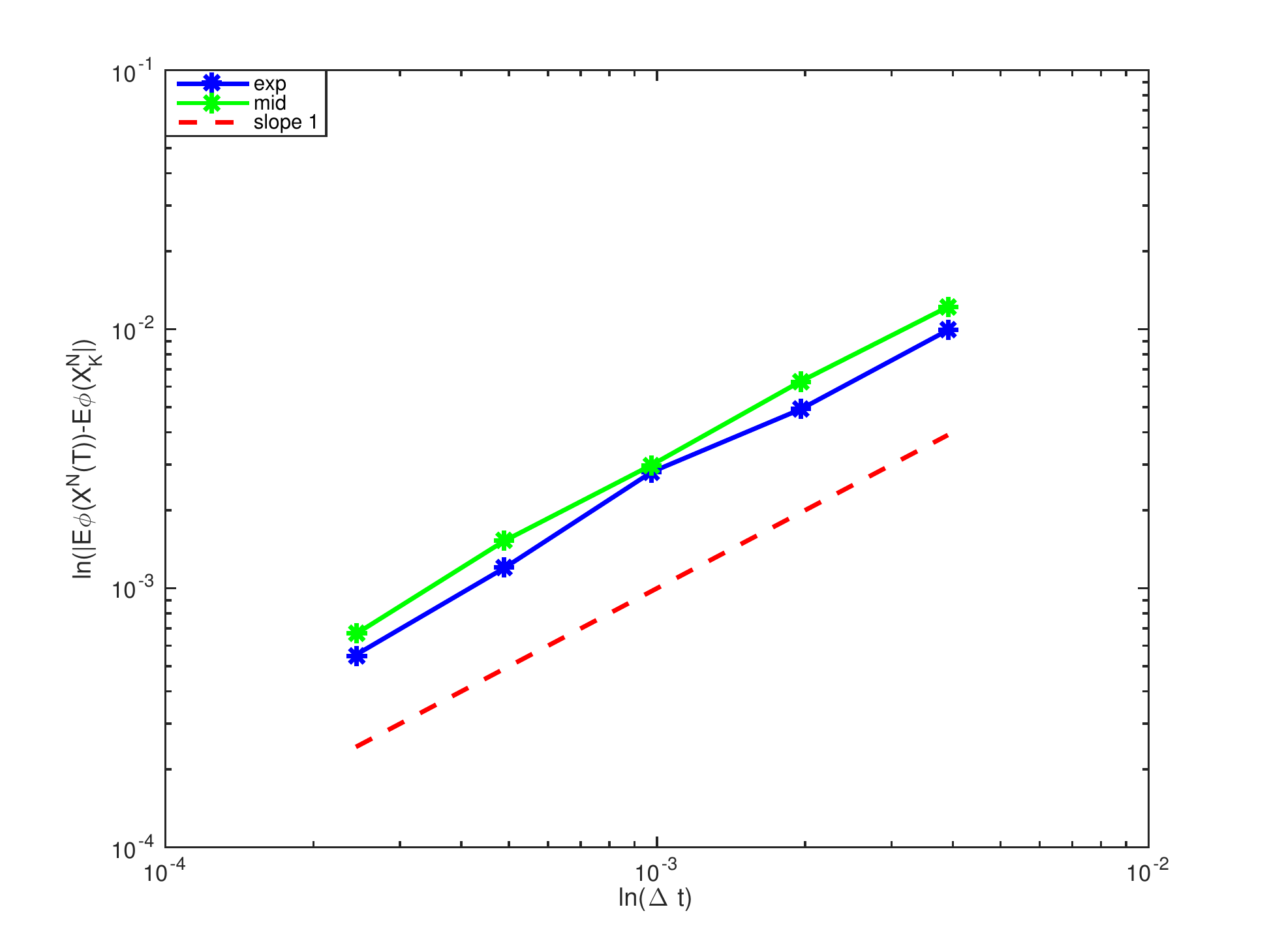}
		\end{minipage}
	}
	\caption{The strong convergence order in temporal direction $($left$)$ $f(u) = u, g(u) =\sin(u)$, $($right$)$ $f(u) = \sin(u), g(u) =\sin(u)$}
	\label{fig1}
\end{figure}

Fig. \ref{fig1} displays the temporal approximation errors $\|u(T)-U^h_N\|_{L^2}$ against $\tau$ on log-log scale with $\tau = 2^{-\bf s}, \,{\bf s} = 8,9,10,11,12$ at time $T=1$ for multiplicative noise, respectively. 
We fix the spatial step size $h=2^{-6}$ and simulate the exact solution with the numerical one by  using a small step size $\tau = 2^{-14}.$  
It can be observed that the slopes of numerical schemes are closed to 1, which implies that the temporal convergence order of the proposed numerical schemes  \eqref{nummet;SAV2-full} and \eqref{nummet;SAV1-full} is 1. Here the expectation is approximated by taking average over 1000 realizations.

\begin{figure}[h]
	\centering
	\subfigure{
		\begin{minipage}{5cm}
			\centering
			\includegraphics[height=4cm,width=5cm]{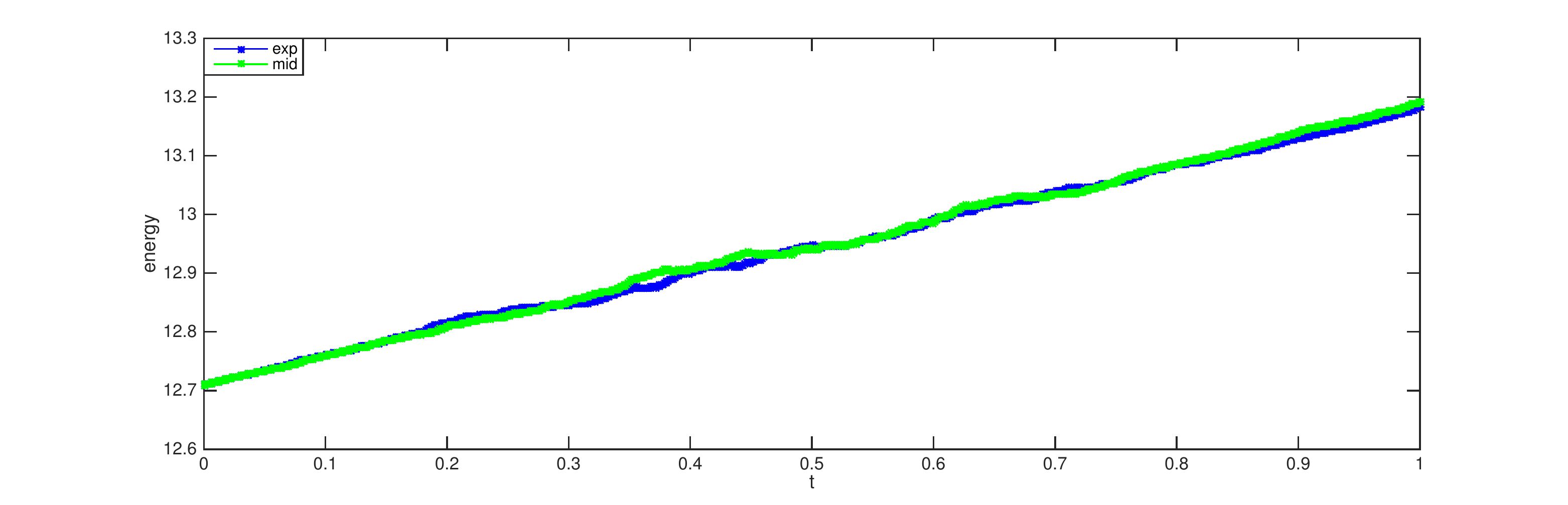}
		\end{minipage}
	}
	\subfigure{
		\begin{minipage}{5cm}
			\centering
			\includegraphics[height=4cm,width=5cm]{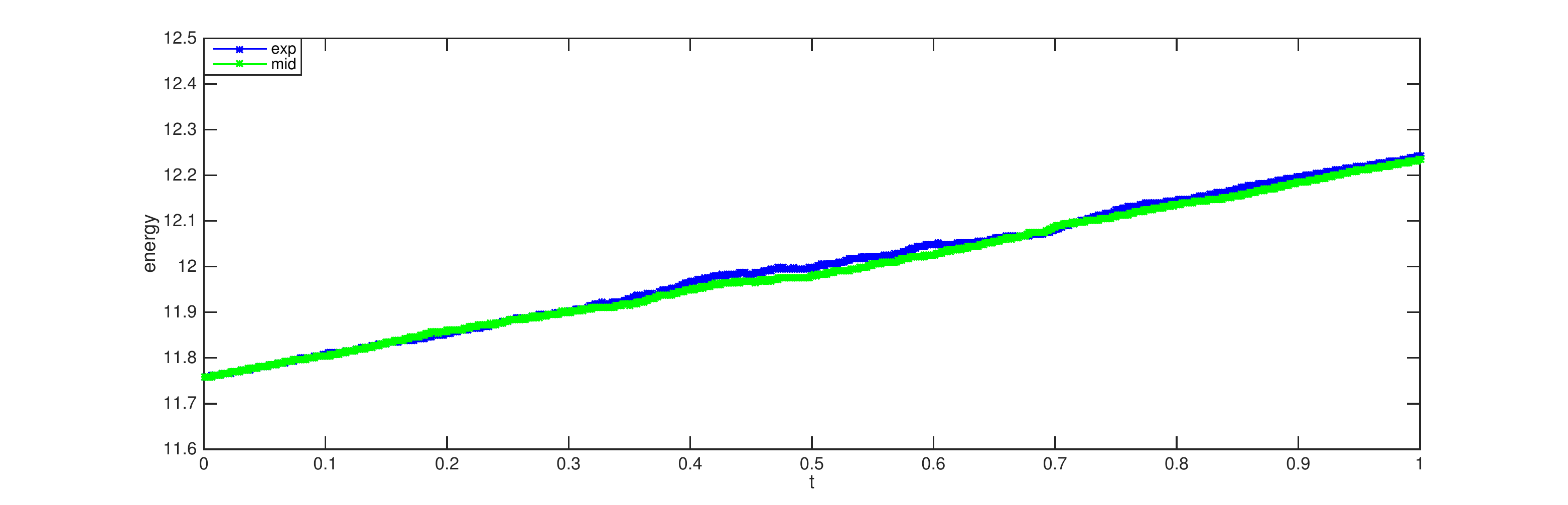}
		\end{minipage}
	}
	\caption{Averaged energy evolution relationship $($left$)$ $f(u) = u,$ $($right$)$ $f(u) = \sin(u)$}
	\label{fig2}
\end{figure}

Fig. \ref{fig2} presents the evolution of discrete averaged energies for the proposed  numerical schemes \eqref{nummet;SAV2-full} and \eqref{nummet;SAV1-full}. 
Here, the expectation is approximated by taking average over 5000 realizations.
From Fig. \ref{fig2}, it can be seen that averaged energies associated with numerical solutions  grow linearly with the time raising when the SWE is driven by additive noise.  
The numerical results are consistent with the theoretical results.

\bibliography{references}

\def\cprime{$'$} \def\cprime{$'$}
\begin{thebibliography}{10}

\bibitem{ACLW16}
R.~Anton, D.~Cohen, S.~Larsson, and X.~Wang.
\newblock Full discretization of semilinear stochastic wave equations driven by
  multiplicative noise.
\newblock {\em SIAM J. Numer. Anal.}, 54(2):1093--1119, 2016.

\bibitem{BLM21}
L.~Banjai, G.~Lord, and J.~Molla.
\newblock Strong convergence of a {V}erlet integrator for the semilinear
  stochastic wave equation.
\newblock {\em SIAM J. Numer. Anal.}, 59(4):1976--2003, 2021.

\bibitem{BCH18}
C.~E. Br\'{e}hier, J.~Cui, and J.~Hong.
\newblock Strong convergence rates of semidiscrete splitting approximations for
  the stochastic {A}llen-{C}ahn equation.
\newblock {\em IMA J. Numer. Anal.}, 39(4):2096--2134, 2019.

\bibitem{BRZEZNIAK20164157}
Z.~Brze\'zniak, M.~Ondrej\'at, and Seidler J.
\newblock Invariant measures for stochastic nonlinear beam and wave equations.
\newblock {\em J. Differential Equations}, 260:4157--4179, 2016.

\bibitem{YC07}
Y.~Cao and L.~Yin.
\newblock Spectral {G}alerkin method for stochastic wave equations driven by
  space-time white noise.
\newblock {\em Commun. Pure Appl. Anal.}, 6(3):607--617, 2007.

\bibitem{CCDL20}
C.~Chen, D.~Cohen, R.~D'Ambrosio, and A.~Lang.
\newblock Drift-preserving numerical integrators for stochastic hamiltonian
  systems.
\newblock {\em Adv. Comput. Math.}, 46(2):1019--7168, 2020.

\bibitem{C02}
P.~Chow.
\newblock Stochastic wave equations with polynomial nonlinearity.
\newblock {\em Ann. Appl. Probab.}, 12(1):361--381, 2002.

\bibitem{Chow06}
P.~Chow.
\newblock Asymptotics of solutions to semilinear stochastic wave equations.
\newblock {\em Ann. Appl. Probab.}, 16(2):757--789, 2006.

\bibitem{CCHS20}
D.~Cohen, J.~Cui, J.~Hong, and L.~Sun.
\newblock Exponential integrators for stochastic {M}axwell's equations driven
  by {I}t\^{o} noise.
\newblock {\em J. Comput. Phys.}, 410:109382, 21, 2020.

\bibitem{CLS13}
D.~Cohen, S.~Larsson, and M.~Sigg.
\newblock A trigonometric method for the linear stochastic wave equation.
\newblock {\em SIAM J. Numer. Anal.}, 51(1):204--222, 2013.

\bibitem{Cui21}
J.~Cui.
\newblock Explicit numerical methods for high dimensional stochastic nonlinear
  schrodinger equation: Divergence, regularity and convergence.
\newblock {\em arXiv:2112.10177}, 2021.

\bibitem{CH17}
J.~Cui and J.~Hong.
\newblock Analysis of a splitting scheme for damped stochastic nonlinear
  {S}chr\"{o}dinger equation with multiplicative noise.
\newblock {\em SIAM J. Numer. Anal.}, 56(4):2045--2069, 2018.

\bibitem{CHJS19}
J.~Cui, J.~Hong, L.~Ji, and L.~Sun.
\newblock Energy-preserving exponential integrable numerical method for
  stochastic cubic wave equation with additive noise.
\newblock {\em arXiv:1909.00575}, 2019.

\bibitem{CHLZ17b}
J.~Cui, J.~Hong, Z.~Liu, and W.~Zhou.
\newblock Strong convergence rate of splitting schemes for stochastic nonlinear
  {S}chr\"{o}dinger equations.
\newblock {\em J. Differential Equations}, 266(9):5625--5663, 2019.

\bibitem{DZ92}
G.~Da~Prato and J.~Zabczyk.
\newblock {\em Stochastic equations in infinite dimensions}, volume~44 of {\em
  Encyclopedia of Mathematics and its Applications}.
\newblock Cambridge University Press, Cambridge, 1992.

\bibitem{Dalang1962LNM}
R.~Dalang, D.~Khoshnevisan, C.~Mueller, D.~Nualart, and Y.~Xiao.
\newblock {\em A minicourse on stochastic partial differential equations}.
\newblock Lecture Notes in Mathematics. Springer-Verlag, Berlin, 2009.

\bibitem{Etter2012Adv}
P.~Etter.
\newblock Advanced applications for underwater acoustic modeling.
\newblock {\em Adv. Acoust. Vib.}, 2012:1--28, 2012.

\bibitem{Galka2008CN}
A.~Galka, T.~Ozaki, H.~Muhle, U.~Stephani, and M.~Siniatchkin.
\newblock A data-driven model of the generation of human eeg based on a
  spatially distributed stochastic wave equation.
\newblock {\em Cogn. Neurodyn.}, 2(2):101--113, 2008.

\bibitem{HHS22}
J.~Hong, B.~Hou, and L.~Sun.
\newblock Energy-preserving fully-discrete schemes for nonlinear stochastic
  wave equations with multiplicative noise.
\newblock {\em J. Comput. Phys.}, 451:Paper No. 110829, 20, 2022.

\bibitem{HJK10}
M.~Hutzenthaler, A.~Jentzen, and P.~E. Kloeden.
\newblock Strong and weak divergence in finite time of euler's method for
  stochastic differential equations with non-globally lipschitz continuous
  coefficients.
\newblock {\em Proc. R. Soc. A.}, 467:1563--1576, 2011.

\bibitem{KL18}
C.~Kelly and G.~Lord.
\newblock Adaptive time-stepping strategies for nonlinear stochastic systems.
\newblock {\em IMA J. Numer. Anal.}, 38(3):1523--1549, 2018.

\bibitem{KLS10}
M.~Kov\'{a}cs, S.~Larsson, and F.~Saedpanah.
\newblock Finite element approximation of the linear stochastic wave equation
  with additive noise.
\newblock {\em SIAM J. Numer. Anal.}, 48(2):408--427, 2010.

\bibitem{Orsingher1982AIHPSB}
E.~Orsingher.
\newblock Randomly forced vibrations of a string.
\newblock {\em Ann. Inst. H. Poincar\'{e} Sect. B (N.S.)}, 18(4):367--394,
  1982.

\bibitem{SX18}
J.~Shen and J.~Xu.
\newblock Convergence and error analysis for the scalar auxiliary variable
  ({SAV}) schemes to gradient flows.
\newblock {\em SIAM J. Numer. Anal.}, 56(5):2895--2912, 2018.

\bibitem{SXY2019SIAMR}
J.~Shen, J.~Xu, and J.~Yang.
\newblock A new class of efficient and robust energy stable schemes for
  gradient flows.
\newblock {\em SIAM Rev.}, 61(3):474--506, 2019.

\bibitem{Thomas2012JMP}
L.~E. Thomas.
\newblock Persistent energy flow for a stochastic wave equation model in
  nonequilibrium statistical mechanics.
\newblock {\em J. Math. Phys.}, 53(9):095208, 10, 2012.

\end{thebibliography}
\bibliographystyle{plain}
\end{document}